\newtheorem{theorem}{Theorem}[section]
\newtheorem{definition}[theorem]{Definition}
\newtheorem{lemma}[theorem]{Lemma}
\newtheorem{corollary}[theorem]{Corollary}
\newtheorem{proposition}{Proposition}[section]
\newtheorem{remarks}[theorem]{Remarks}
\begin{document}

\title[On Operator-valued Semicircular Random Variables]
{On Operator-valued Semicircular Random Variables}

\maketitle

\begin{center}
\author {Mohsen Soltanifar\footnote{Mohsen Soltanifar \newline Department of Mathematics and Statistics, University of Saskatchewan, 106 Wiggins Road,  Saskatoon, S7N 5E6, Canada \newline e-mail: mohsen.soltanifar@usask.ca}}
\end{center}

\begin{abstract}

In this paper, we discuss some special properties of operator-valued semicircular random variables including representation of the Cauchy transform of a compactly supported probability measure in terms of their operator-valued Cauchy transforms and existence of nonzero discrete part of their associated distributions.

\end{abstract}

																												 
\textbf{Keywords}  Semicircular distributions, Operator-Valued Non-Commutative Probability, Cauchy-Stieltjes transform, Continued Fractions\\

\textbf{Mathematics Subject Classification (2010).} Primary 60B05, 30E20; \ Secondary 30B70, 47B80


\section{Introduction}

In his 1995 paper on operator-valued free probability theory, which became one of the main pillars of free probability , Voiculescu ~\cite{DV} introduced the operator-valued free central limit theorem and operator-valued semicircular random variables as operator-valued free analogous of the classical central limit theorem and normal random variables, respectively. Similar to normal random variables in classical probability theory, operator-valued semicircular random variables play key roles in many areas of operator-valued free probability and related fields. Following Wigner ~\cite{W} and Voiculescu ~\cite{DV0}, Shlyakhtenko showed in ~\cite{SHL} that operator-valued semicircular random variables also play an important role in random matrix theory, as asymptotic distributional limits of Gaussian band matrices. Later this result found several applications. We shall mention here the paper of Rashidi Far and others ~\cite{RR - OT - BW - SR} which shows the importance of matrix-valued semicircular random variables in communication theory.\\

This paper deals with exploring some other aspects of operator-valued semicircular random variables. Our investigation originates from a question of Speicher on the existence of a discrete part in the spectrum of a matrix-valued semicircular random variable. Following well-established methods in noncommutative probability, we approach this problem through the study of distributions of such a random variable with respect to convenient positive linear functionals. In our work we were led to question roughly which probability measures can occur as scalar-valued distributions of operator-valued semicircular random variables. It turns out that  the operator-valued semicircular random variables have a certain universal property: For given arbitrary compactly supported probability measure, its associated Cauchy transform is the composition of an extremal state and the operator-valued Cauchy transform of an infinite dimensional matrix-valued semicircular random variable (Theorem \ref{th: 12}). Moreover, our proof gives a constructive method to find this semicircular random variable based on the continued fraction representation of the Cauchy transform of the given probability measure. Our second result directly concerns the discrete part of the distribution of a matrix-valued semicircular random variable with respect to the linear functional induced by the composition of the expectation with the matrix trace. It was noted by Speicher  (private communications with Belinschi) that an $M_{2}(\mathbb{C})-$valued semicircular random variable with variance
$\eta\left(\begin{array}{cc}z & v\\
y & w \end{array}\right)
=\left(\begin{array}{cc}
0 & 1\\
0 & 0
\end{array}
\right)
\left(\begin{array}{cc}
z & v\\
y & w
\end{array}
\right)
\left(\begin{array}{cc}
0 & 0\\
1 & 0
\end{array}
\right)
$  has a purely discrete distribution equal to $\mu=\frac{1}{4}\delta_{+1}+\frac{1}{2}\delta_{0}+\frac{1}{4}\delta_{-1}.$ Motivated by this observation and the proof of the Proposition \ref{pro:11}, we were led to show in Theorem \ref{th:9} that any matrix-valued semicircular random variable with nilpotent variance has an atom at zero. Finally, by combining the methods used in the proofs of the previous results, it is shown that the semicircular distributions of finite dimensional matrix-valued semicircular random variables can cover finitely supported probability measures in a special sense. This paper is divided into three sections including preliminaries, representation of  the Cauchy transform using semicircular random variables and a discussion of atoms of distributions of matrix-valued semicircular random variables.


\section{Preliminaries}

The reader who has studied the concepts of Cauchy or Cauchy-Stieltjes transform, continued fraction, operator-valued noncommutative random variables, and semicircular distributions is well acquainted with the  following definitions and results. For an essential account of the mentioned concepts, see
~\cite{AH - NO}, ~\cite{JS - JT}, and ~\cite{DV}. Throughout this paper it is assumed that the probability measure $\mu$ on $\mathbb{R}$ is compactly supported.  We begin with a definition:

\begin{definition}
Let $\mu$ be a probability measure on the Borel $\sigma$-algebra of $\mathbb{R}$ . The associated Cauchy transform $G_{\mu}$ to $\mu$ is defined by:
$$
G_{\mu}(\xi)=\int_{\mathbb{R}}\frac{d\mu(t)}{\xi-t}\ \ \ \ Im\xi\neq0.
$$
\end{definition}
Some properties of $G_{\mu}$ are listed in the following proposition. We denote by $\mathbb{C}^{+}=\{\xi\in\mathbb{C}:Im \xi>0\},$ $Supp(\mu)=\mathbb{R}\setminus\bigcup \{U\subseteq\mathbb{R}|U\ open, \mu(U)=0\},$ and  $\Gamma_{\alpha}(r)=\{\xi\in\mathbb{C}^{+}:|Re(\xi)-r|<\alpha Im(\xi)\}\ \ (-\infty<r<\infty).$
\begin{proposition}
\label{pro:2}
Let $G=G_{\mu}$ be the Cauchy transform of a  probability measure $\mu$ on $\mathbb{R}.$ Then:\newline\\
(i)\ $G$ is analytic on $\mathbb{C}\setminus Supp(\mu),$\\
(ii)\ If $\xi\in\mathbb{C}^{+}(\mathbb{C}^{-}),$ then $G(\xi)\in\mathbb{C}^{-}(\mathbb{C}^{+}),$\\
(iii)\ $\lim_{\Gamma_{\alpha}(0)\ni\xi\rightarrow\infty}\xi.G(\xi)=1,$ for any fixed $0<\alpha<\infty.$
\end{proposition}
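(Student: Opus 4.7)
The plan is to verify each of the three assertions by a direct computation with the integral defining $G_\mu$, justified by dominated convergence and the compactness of $\mathrm{Supp}(\mu)$. No deep tools are involved; the whole argument amounts to producing a uniform bound on the integrand in a suitable neighbourhood and then passing the limit inside the integral.

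For (i), I would fix $\xi_{0}\in\mathbb{C}\setminus\mathrm{Supp}(\mu)$, choose $\delta>0$ with $\overline{B(\xi_{0},\delta)}\cap\mathrm{Supp}(\mu)=\emptyset$, and observe that $|\xi-t|\ge\delta/2$ for all $\xi\in B(\xi_{0},\delta/2)$ and all $t\in\mathrm{Supp}(\mu)$. With this lower bound the difference quotient $(\xi-\xi_{0})^{-1}\bigl[(\xi-t)^{-1}-(\xi_{0}-t)^{-1}\bigr]$ is uniformly bounded in $t$ with pointwise limit $-(\xi_{0}-t)^{-2}$ as $\xi\to\xi_{0}$, so dominated convergence yields $G'(\xi_{0})=-\int d\mu(t)/(\xi_{0}-t)^{2}$ and $G$ is holomorphic off $\mathrm{Supp}(\mu)$.

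For (ii), I would use the elementary identity $\mathrm{Im}\bigl(1/(\xi-t)\bigr)=-\mathrm{Im}(\xi)/|\xi-t|^{2}$. When $\xi\in\mathbb{C}^{+}$ the right-hand side is strictly negative for every real $t$, and integrating against $\mu$ gives $\mathrm{Im}\,G(\xi)<0$. The case $\xi\in\mathbb{C}^{-}$ is symmetric.

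For (iii), the compact support of $\mu$ is what makes the limit essentially automatic. Writing $\xi G(\xi)=\int d\mu(t)/(1-t/\xi)$ and setting $M=\sup\{|t|:t\in\mathrm{Supp}(\mu)\}<\infty$, I would note that for $|\xi|>2M$ the integrand is bounded in modulus by $2$ and converges pointwise to $1$ as $|\xi|\to\infty$; dominated convergence then delivers the limit $1$. The restriction to the Stolz angle $\Gamma_{\alpha}(0)$ is not genuinely used in the compactly supported setting (any mode of going to infinity works), but it is the natural hypothesis for the same statement in the non-compactly-supported case. The only real ``obstacle'' throughout is the minor bookkeeping of these uniform lower bounds on $|\xi-t|$ that legitimise each dominated-convergence step; no delicate boundary behaviour on $\mathbb{R}$ intervenes, since we remain away from $\mathrm{Supp}(\mu)$ everywhere.
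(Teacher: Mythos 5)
Your proof is correct: each of the three dominated-convergence arguments is properly justified by the uniform lower bounds on $|\xi-t|$ you supply, and your observation that the Stolz-angle restriction in (iii) is superfluous for compactly supported $\mu$ is accurate. The paper itself offers no proof of this proposition --- it is quoted as a standard fact with a reference to Shohat and Tamarkin --- so there is nothing to compare against; your elementary verification stands on its own as a complete argument.
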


Note that any function can be identified as a Cauchy transform of a probability measure on $\mathbb{R}$ by possessing the three mentioned properties in the Proposition above ~\cite{JS - JT}. In addition, it is straightforward to check that:
\begin{eqnarray}
\label{eq:3}
\lim_{\Gamma_{\alpha}(r)\ni\xi\rightarrow r}(\xi-r)G_{\mu}(\xi)&=&\mu(\{r\})\ \ (-\infty<r<\infty). \\
\nonumber
\end{eqnarray}

\begin{definition}
A probability measure $\mu$ on $\mathbb{R}$ is said to have finite moment of order $m\geq1$ if $\int_{\mathbb{R}}|t|^{m}d\mu(t)<\infty,$ and in this case the $m^{th}$ moment of $\mu$ is defined by $M_m=\int_{\mathbb{R}}t^{m}d\mu(t).$
\end{definition}

We denote the set of all Borel probability measures on $\mathbb{R}$ having finite moments of all orders by $\mathcal{B}_{fm}(\mathbb{R}).$ It is trivial that any compactly supported probability measure $\mu$ is in this set, and as a corollary of the Carleman's moment test ~\cite[Theorem 1.36]{AH - NO} it is a solution of the determinate moment problem. Next, using the idea of associated Gram-Schmidt orthonormal polynomials to a probability measure $\mu$, one can prove the existence of the so-called Jacobi coefficients of the given probability measure $\mu\in\mathcal{B}_{fm}(\mathbb{R}),$~\cite[Theorem 1.44]{AH - NO}:
\begin{theorem}
Let $\{p_m(t)\}_{m=0}^{\infty}$ be the Gram-Schmidt orthonormal polynomials associated with given $\mu\in\mathcal{B}_{fm}(\mathbb{R}).$ Then there exists a pair of sequences $\{\alpha_m\}_{m=1}^{\infty}\subseteq\mathbb{R}$ and $\{\omega_m\}_{m=1}^{\infty}\subseteq\mathbb{R}^{+}$ uniquely determined by:
\begin{eqnarray}
\label{eq:4}
p_0(t)&=&1,\nonumber\\
p_1(t)&=&t-\alpha_1,\\
tp_m(t)&=&p_{m+1}(t)+\alpha_{m+1}p_m(t)+\omega_{m}p_{m-1}(t)\ m\geq1,
\nonumber
\end{eqnarray}
where in which, if $|Supp(\mu)|=\infty,$ both $\{\alpha_m\}_{m=1}^{\infty},\{\omega_m\}_{m=1}^{\infty}$ are infinite sequences, and if $|Supp(\mu)|=m_0<\infty,$ we have $\{\alpha_m\}_{m=1}^{\infty}=\{\alpha_m\}_{m=1}^{m_0}$ and $\{\omega_m\}_{m=1}^{\infty}=\{\omega_m\}_{m=1}^{m_0-1}$ with $p_{m_0}=0.$
\end{theorem}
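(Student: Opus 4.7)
The plan is to realize the $p_m$ as monic orthogonal polynomials in $L^2(\mu)$ and then read off the Jacobi coefficients from the unique expansion of $tp_m$ in the basis they generate. The hypothesis $\mu\in\mathcal{B}_{fm}(\mathbb{R})$ ensures that every monomial lies in $L^2(\mu)$, so the bilinear form $\langle f,g\rangle=\int fg\,d\mu$ is well defined on the polynomial algebra. When $|Supp(\mu)|=\infty$, the monomials $\{1,t,t^{2},\dots\}$ are linearly independent in $L^2(\mu)$, and applying Gram-Schmidt with the normalization ``leading coefficient $=1$'' produces polynomials $p_m$ of degree exactly $m$ with $p_0=1$ and $p_1=t-\alpha_1$, where $\alpha_1=\int t\,d\mu$. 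When $|Supp(\mu)|=m_0<\infty$, the space $L^2(\mu)$ has dimension $m_0$; the process outputs $p_0,\dots,p_{m_0-1}$, and $\prod_{i=1}^{m_0}(t-x_i)$, taken over the atoms of $\mu$, vanishes in $L^2(\mu)$, which justifies the convention $p_{m_0}=0$.

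Next I would derive the three-term recurrence. Since $\deg(tp_m)=m+1$, the polynomial $tp_m$ admits a unique expansion $tp_m=\sum_{k=0}^{m+1}c_{m,k}\,p_k$. Matching the monic leading coefficients on both sides forces $c_{m,m+1}=1$. For the remaining coefficients I would invoke self-adjointness of multiplication by $t$, namely
$$
c_{m,k}\,\langle p_k,p_k\rangle \;=\; \langle tp_m,p_k\rangle \;=\; \langle p_m,tp_k\rangle .
$$
When $k<m-1$, the polynomial $tp_k$ has degree $\leq m-1$ and hence lies in the span of $\{p_0,\dots,p_{m-1}\}$, each of which is orthogonal to $p_m$; this forces $c_{m,k}=0$. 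Only $c_{m,m}$ and $c_{m,m-1}$ survive, and I define $\alpha_{m+1}:=c_{m,m}$ and $\omega_m:=c_{m,m-1}$.

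To close the argument I would check reality, positivity, and uniqueness. Reality of $\alpha_m$ is automatic because the inner product is real on real polynomials. For positivity of $\omega_m$, the key identity is $\omega_m\,\langle p_{m-1},p_{m-1}\rangle=\langle p_m,tp_{m-1}\rangle$; substituting the previous step's expansion $tp_{m-1}=p_m+(\text{terms of degree}\leq m-1)$ shows the right-hand side equals $\|p_m\|^2>0$, so $\omega_m=\|p_m\|^2/\|p_{m-1}\|^2>0$. Uniqueness follows from the monic normalization: the coefficients are forced by the inner-product formulas above, and conversely the recurrence reconstructs the $p_m$ from the initial data $p_0=1$. In the finite-support case one terminates at $m=m_0-1$; the convention $p_{m_0}=0$ then degenerates the recurrence into $tp_{m_0-1}=\alpha_{m_0}p_{m_0-1}+\omega_{m_0-1}p_{m_0-2}$, yielding the claimed truncated sequences.

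The main obstacle is not any single calculation but the bookkeeping around $\deg p_m$: one must justify that the Gram-Schmidt step at level $m$ produces a nonzero monic polynomial exactly when $|Supp(\mu)|>m$. This reduces to a nondegeneracy statement for the moment matrix $[M_{i+j}]_{i,j=0}^{m}$, which is positive definite precisely when $\mu$ is not supported on at most $m$ points. Once this is in place, the rest of the proof is forced by self-adjointness of multiplication by $t$ and the monic normalization.
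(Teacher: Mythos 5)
Your proof is correct. Note that the paper itself gives no proof of this statement: it is quoted verbatim from Hora and Obata (Theorem 1.44 of the cited book), so there is nothing internal to compare against. Your argument --- Gram--Schmidt on the monomials, self-adjointness of multiplication by $t$ to kill the coefficients $c_{m,k}$ for $k<m-1$, the identity $\omega_m=\|p_m\|^2/\|p_{m-1}\|^2>0$, and the positive-definiteness of the Hankel matrix $[M_{i+j}]_{i,j=0}^{m}$ to control when the process terminates --- is the standard and complete proof of this fact, and it correctly handles the finite-support degeneration via $p_{m_0}=0$ in $L^2(\mu)$. The only cosmetic remark is that the polynomials in the statement are monic rather than orthonormal (as $p_1=t-\alpha_1$ and the leading coefficient $1$ in the recurrence show), and your reading is the one consistent with the displayed formulas.
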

Note that as a corollary of the equations \eqref{eq:4}, for the  compactly supported probability measure $\mu\in\mathcal{B}_{fm}(\mathbb{R})$  we have:
\begin{eqnarray}
\label{ineq:5}
\sup_{m}(|\alpha_m|+\omega_m)&\leq&2\sup_{t\in Supp(\mu)}|t|<\infty.\\
\nonumber
\end{eqnarray}
The following theorem gives a continued fraction representation of the associated Cauchy transform of any  probability measure  $\mu\in\mathcal{B}_{fm}(\mathbb{R}),$~\cite[Theorem 1.97]{AH - NO}:
\begin{theorem}
\label{th:8}
Let $\mu\in\mathcal{B}_{fm}(\mathbb{R})$ and $(\{\omega_n\}_{n=1}^{\infty},\{\alpha_n\}_{n=1}^{\infty})$ be its Jacobi coefficients. If $\mu$ is the solution of the determinate moment problem, then the Cauchy transform of it is expanded into a continued fraction
$$
G_{\mu}(\xi)=\frac{1}{\xi-\alpha_1-\displaystyle{\frac{\omega_1}{\xi-\alpha_2-\displaystyle{\frac{\omega_2}{{\xi-\alpha_3-}_{\displaystyle{\ddots}}}}}}}\ \ \ \  Im(\xi)\neq0.
$$
\end{theorem}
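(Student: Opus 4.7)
The plan is to realize $G_{\mu}$ as the $(0,0)$-entry of the resolvent of the Jacobi operator associated with the recurrence \eqref{eq:4}, and then read off the continued fraction by iterated Schur complements. First I would form the tridiagonal operator $J$ on $\ell^{2}(\mathbb{N}_{0})$ whose diagonal is $(\alpha_{n+1})_{n\geq 0}$ and whose off-diagonals are chosen so that, in the basis of suitably normalized polynomials $q_{m}=p_{m}/\|p_{m}\|_{L^{2}(\mu)}$, multiplication by $t$ on $L^{2}(\mu)$ is represented by $J$. Symmetrizing \eqref{eq:4} shows the off-diagonal entries are $\sqrt{\omega_{m}}$. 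Since $Supp(\mu)$ is compact, the bound \eqref{ineq:5} makes $J$ a bounded self-adjoint operator on $\ell^{2}(\mathbb{N}_{0})$.

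Next I would identify $\mu$ as the spectral measure of $J$ at $e_{0}$. Since $\mu\in\mathcal{B}_{fm}(\mathbb{R})$ is determinate, polynomials are dense in $L^{2}(\mu)$, so the $q_{m}$ form an orthonormal basis; the constant function $1\in L^{2}(\mu)$ corresponds to $e_{0}$, and the change of basis intertwines multiplication by $t$ with $J$. Hence
$$G_{\mu}(\xi)=\int_{\mathbb{R}}\frac{d\mu(t)}{\xi-t}=\bigl\langle e_{0},(\xi-J)^{-1}e_{0}\bigr\rangle\qquad(Im\xi\neq 0).$$

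With this reformulation in hand, the continued fraction follows from iterated Schur complements. Split $\xi-J$ into the $1\times 1$ top-left block $\xi-\alpha_{1}$ and the remainder $\xi-J^{(1)}$, where $J^{(1)}$ is the truncated Jacobi operator associated with the shifted parameters $(\{\alpha_{n+1}\}_{n\geq 1},\{\omega_{n+1}\}_{n\geq 1})$. The Schur-complement formula for the $(0,0)$-entry of the inverse yields
$$[(\xi-J)^{-1}]_{00}=\frac{1}{(\xi-\alpha_{1})-\omega_{1}[(\xi-J^{(1)})^{-1}]_{00}},$$
and iterating this identity on $J^{(1)},J^{(2)},\ldots$ produces the displayed continued fraction at every finite truncation level.

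The main obstacle I expect is not the algebraic identity, which is essentially a linear-algebra triviality, but rather convergence: one must show that the infinite continued fraction actually converges to $G_{\mu}(\xi)$ and does not merely encode the finite resolvent identities. Here the convergents are the $[m/m]$ Pad\'e approximants of $G_{\mu}$, and since compact support of $\mu$ (via Carleman's criterion) forces $\mu$ to be the unique solution of its moment problem, classical Stieltjes--Hamburger theory identifies the limit of these Pad\'e approximants with $G_{\mu}$ on $\mathbb{C}\setminus Supp(\mu)$. The uniform bound \eqref{ineq:5} supplies the normal-family estimates that make the convergents behave well on compact subsets of $\mathbb{C}\setminus Supp(\mu)$ and finishes the proof.
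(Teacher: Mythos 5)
The paper does not actually prove this theorem: it is quoted verbatim from Hora--Obata (their Theorem 1.97) as a preliminary, so there is no internal proof to compare against. Your argument is the standard spectral-theoretic route to that result and is sound in outline: the Jacobi operator $J$ with diagonal $(\alpha_{n+1})_{n\geq 0}$ and off-diagonals $\sqrt{\omega_{n}}$ is unitarily equivalent to multiplication by $t$ on $L^{2}(\mu)$ (density of polynomials holding by Weierstrass approximation in the compactly supported case, or by M.~Riesz's theorem under determinacy alone), so $G_{\mu}(\xi)=[(\xi-J)^{-1}]_{00}$, and the one-step Schur complement identity you write is exactly correct, as is its iteration.

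Two caveats. First, boundedness of $J$ via \eqref{ineq:5} uses compact support, which is the paper's standing assumption but is not implied by the hypotheses as stated ($\mu\in\mathcal{B}_{fm}(\mathbb{R})$ and determinate); for unbounded $J$ one needs essential self-adjointness, which is precisely where determinacy enters. Second, and more substantively: the iterated Schur complements give an \emph{exact} identity whose tail at level $n$ is $\omega_{n}[(\xi-J^{(n)})^{-1}]_{00}$ rather than $0$, so the convergence of the truncated convergents to $G_{\mu}$ is the entire analytic content of the theorem, and deferring it to ``classical Stieltjes--Hamburger theory'' comes close to citing the theorem itself (this is Markov's theorem). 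You can close that gap with the tools you already name: the $n$-th convergent is the Cauchy transform of the Gauss quadrature measure $\mu_{n}$ supported on the zeros of the $n$-th orthogonal polynomial, hence inside the convex hull of $Supp(\mu)$; the moments of $\mu_{n}$ agree with those of $\mu$ up to order $2n-1$; the convergents are uniformly bounded by $1/Im(\xi)$ on $\mathbb{C}^{+}$, so a normal-family argument shows every subsequential limit is the Cauchy transform of a measure with the same moments as $\mu$, and determinacy forces that measure to be $\mu$. With that step made explicit your proof is complete.
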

Before introducing the distribution of operator-valued semicircular random variables, we remind the following essential definitions and results from the operator-valued noncommutative probability theory:
\begin{definition}
\label{def:6}
(1)\ Let $A$ be a unital $*$-algebra and let $B$ denote a fixed unital $*$-subalgebra of $A$ over $\mathbb{C}$. A linear map $E_{B}:A\rightarrow B$ is called a conditional expectation if it satisfies the following conditions:
\newline \\
(i) $E_{B}(b_1ab_2)=b_1E_{B}(a)b_2$ for all $a\in A,\ b_1,b_2\in B$, and $E_{B}(1)=1,$\\
(ii) $E_{B}(a^{*}a)\geq0,$ for all $a\in A,$
\newline\\
(2)\  A triple $(A,E_{B},B)$ as in part (1) is called a $B$-valued non-commutative probability space. An element $a\in A$ is called a $B$-valued random variable,\\
(3)\ Let $(A,E_{B},B)$ be as in part (2), and $B\subseteq A_i\subseteq A (i\in I)$ be subalgebras. The family $\{A_i\}_{i\in I}$ is called free over $B$ if $$E_{B}(a_{i_1}a_{i_2} \cdots a_{i_n})=0$$
whenever $i_1\neq i_2,i_2\neq i_3, \cdots, i_{n-1} \neq i_n, a_{i_j}\in A_{i_j}$ and $ E_{B}(a_{i_j})=0, (1\leq j\leq n).$
\end{definition}

We call the family  $\{X_i\}_{i\in I}$ of subsets of A (elements $\{a_i\}_{i\in I}$ of A) free if the corresponding family of subalgebras $\{\langle X_i\cup B\rangle\}_{i\in I}$ ($\{\langle\{a_i\}\cup B\rangle\}_{i\in I}$) is free.\\

Given an operator-valued noncommutative probability space $(A,E_{B},B)$ and a $B$-valued random variable $a\in A,$ the associated moments of $a$ are, by definition, the  multilinear functionals $\{m_{n}\}_{n=0}^{\infty}$ defined by:
\begin{eqnarray}
&& m_{n}: B^{n}\rightarrow B,\nonumber\\
&& m_{n}(b_1,b_2,\cdots,b_n)=E_{B}(ab_{1}ab_{2}\cdots ab_{n}a),\nonumber
\nonumber
\end{eqnarray}
in which the quantity $m_0=E_{B}(a)\in B$ is called the first moment, the map $b_1\mapsto E_{B}(ab_{1}a)$ is called the second moment, and in general the map $(b_1,b_2,\cdots,b_n)\mapsto E_{B}(ab_{1}ab_{2}\cdots ab_{n}a)$ is called the $(n+1)^{th}$ moment. Next, let $B$ be a Banach algebra, and $a,a_1,\cdots,a_m,\cdots$ be a sequence of of $B$-valued random variables in $A$ with associated sequences of moments $\{m_{n}\}_{n=0}^{\infty},\{m_{n}^{(1)}\}_{n=0}^{\infty},\cdots,\{m_{n}^{(m)}\}_{n=0}^{\infty},\cdots,$ respectively. We say that the sequence $\{a_{m}\}_{m=1}^{\infty}$ convergence to  $a$ in moments if
$$
\lim_{m\rightarrow\infty}\|m_{n}^{(m)}(b_1,b_2,\cdots,b_n)-m_{n}(b_1,b_2,\cdots,b_n)\|=0,
$$
for all $(b_1,b_2,\cdots,b_n)\in B^{n},$ and $n\geq0.$ \\

Having the same assumptions as above, we recall that any $a\in A$ can be written as $a=Re(a)+i.Im(a)$ where $Re(a)=\frac{a+a^{*}}{2}$ and $Im(a)=\frac{a-a^{*}}{2i}$ are self-adjoint elements. We define $\mathbb{H}^{+}(A)=\{a\in A|Im(a)>0\},$ where $Im(a)>0$ means $Im(a)>\epsilon.1$ for some $\epsilon>0$, and similarly $\mathbb{H}^{+}(B).$ Then the operator-valued Cauchy transform of $a\in A$ is an analytic map $G_{a}$ defined via:
\begin{eqnarray}
&&G_{a}:\mathbb{H}^{+}(B)\rightarrow \mathbb{H}^{-}(B)\nonumber\\
&&G_{a}(b)=E_{B}((b-a)^{-1})\nonumber\\
&&\ \ \ \ \ \ \ \ =\sum_{n=0}^{\infty}b^{-1}E_{B}((ab^{-1})^{n})\ \ b\in \mathbb{H}^{+}(B), \|b^{-1}\|<\|a\|^{-1}.\nonumber
\nonumber
\end{eqnarray}
Next, the operator-valued $R-$transform of $a\in A,$ $R_{a}:B\rightarrow B,$ can be defined by the relation
$$
bG_{a}(b)=1+R_{a}(G_{a}(b)).G_{a}(b)\ \ (b\in B),
$$
~\cite[Theorem 4.9.]{DV}. The following central limit theorem for operator-valued random variables is due to Voiculescu, ~\cite[Theorem 8.4.]{DV}:
\begin{theorem}
(Free Central Limit Theorem) Let $B$ be a Banach algebra and $a_{1},a_{2},\cdots,a_{m},\cdots$ be a  sequence of free $B$-valued random variables in the non-commutative operator valued probability space $(A, E_{B},B)$such that:
\newline\\
(i)$E_{B}(a_m)=0,\ (m\in\mathbb{N}),$\\
(ii) there is a bounded linear map $\eta:B\rightarrow B$ such that $$\lim_{n\rightarrow\infty}\frac{\sum_{m=1}^{n}E_{B}(a_mba_m)}{n}=\eta(b),\ (b\in B),$$
(iii) there are constants $C_k\ (k\in\mathbb{N})$ such that $$\sup_{m\in\mathbb{N}}\| E_{B}(a_mb_1a_m \cdots b_ka_m)\|\leq C_k\| b_1\| \cdots \| b_k\|\ (k\in\mathbb{N}).$$
Then the sequence $S_m=\frac{\sum_{k=1}^{m}a_k}{\sqrt{m}}\ (m\in\mathbb{N})$ converges in moments.
\end{theorem}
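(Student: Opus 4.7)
My plan is to establish, for each $N\geq 1$ and each choice of $b_1,\dots,b_{N-1}\in B$, the convergence of
$$E_{B}\bigl(S_n\, b_1 S_n\, b_2 \cdots b_{N-1} S_n\bigr)$$
as $n\to\infty$, by a combinatorial analysis of the multi-index expansion combined with freeness.

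First, I would expand
$$E_{B}(S_n b_1 S_n \cdots b_{N-1} S_n) = \frac{1}{n^{N/2}} \sum_{i_1,\dots,i_N=1}^{n} E_{B}(a_{i_1} b_1 a_{i_2} b_2 \cdots b_{N-1} a_{i_N}),$$
and group the sum according to the set-partition $\pi(\mathbf{i})$ of $\{1,\dots,N\}$ induced by declaring $j\sim j'$ iff $i_j=i_{j'}$. By freeness (Definition \ref{def:6}(3)) together with the centering $E_{B}(a_m)=0$ in (i), any multi-index whose associated partition contains a singleton block produces a vanishing inner expectation: isolating the lone $a_{i_j}$ and writing the product as an alternating word in the subalgebras $\langle \{a_{i_k}\}\cup B\rangle$ allows the centered moment identity to be applied. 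Hence only partitions all of whose blocks have size at least two contribute.

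For such a $\pi$ with $r$ blocks, the number of admissible multi-indices is $n(n-1)\cdots(n-r+1)\sim n^r$, and by (iii) the inner expectations are uniformly bounded. Since every block having size $\geq 2$ forces $r\leq N/2$, dividing by $n^{N/2}$ shows that only partitions with $r=N/2$ survive in the limit; in particular $N$ must be even and $\pi$ must be a pair partition. Among pair partitions I would then show that a crossing one contributes only at strictly lower order: given a crossing, one picks an innermost crossing pair and iteratively inserts decompositions of the form $x = E_{B}(x)+(x-E_{B}(x))$ on the inner factor, collapsing the centered pieces via Definition \ref{def:6}(3); the terms involving $E_{B}(x)$ reduce to an expectation over strictly fewer independent variables, so the count of admissible multi-indices drops by at least one factor of $n$, and assumption (iii) controls the residue.

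Finally, for a non-crossing pair partition an innermost pair is necessarily adjacent, of the form $\{p,p+1\}$ with a single $b_p$ sandwiched between. For such an inner pair the partial sum
$$\frac{1}{n}\sum_{i=1}^{n} E_{B}(\cdots a_i b_p a_i \cdots)$$
may be reduced, via the operator-valued freeness of $a_i$ from the remaining variables together with the uniform control (iii), to $E_{B}(\cdots \eta(b_p) \cdots)$ in the limit, by assumption (ii). Peeling off innermost pairs inductively yields a limit of the form $\sum_{\pi\in NC_2(N)} \tau_\pi(b_1,\dots,b_{N-1})$, where $\tau_\pi$ is a nested application of $\eta$ to the $b_j$ read off from $\pi$; this in particular proves convergence in moments in the sense defined before the theorem.

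The main obstacle is the second step: separating orders in $n$ to rule out both partitions with a block of size $\geq 3$ and crossing pair partitions. In the scalar case the combinatorics is classical, but in the operator-valued setting the intermediate factors $b_j$ do not commute with the successive applications of $E_{B}$, so each reduction via Definition \ref{def:6}(3) must be performed while carefully tracking the resulting $B$-valued residues. Managing this iterated bookkeeping, and showing that the uniform constants $C_k$ in (iii) suffice to bound all residues arising along the way, is where I expect the real work to lie.
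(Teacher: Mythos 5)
The paper does not prove this theorem: it is quoted verbatim as Voiculescu's result \cite[Theorem 8.4]{DV} and used as a black box, so there is no internal proof to compare against. Your outline is the standard moment-method proof of the operator-valued free CLT and is correct in its architecture: expansion over multi-indices, grouping by the induced partition, killing singletons via centering plus Definition \ref{def:6}(3), order-counting to reduce to pair partitions, and peeling innermost (necessarily adjacent) pairs of a non-crossing pair partition to produce nested applications of $\eta$. Two places in your sketch need more care than you indicate. First, hypothesis (iii) only bounds moments $E_{B}(a_m b_1 a_m\cdots b_k a_m)$ in which the \emph{same} index is repeated; to bound a general mixed term $E_{B}(a_{i_1}b_1\cdots b_{N-1}a_{i_N})$ you must first use freeness (iterated centering, or the moment--cumulant relations) to express it through single-variable moments, and only then invoke (iii) --- this is also what makes the value of a term depend only on its partition together with the block-to-index assignment. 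Second, hypothesis (ii) is only a Ces\`{a}ro condition and the variables are not assumed identically distributed, so when several pairs are peeled you must show that $\frac{1}{n^{r}}\sum_{i_1,\dots,i_r\ \mathrm{distinct}}$ of a nested, separately linear expression in the maps $\eta_{i}(\cdot)=E_{B}(a_i\cdot a_i)$ converges to the same expression in $\eta$; this follows from (ii), the uniform bound $\|\eta_i\|\le C_1$ from (iii), and the observation that removing the distinctness constraint costs only $O(1/n)$, but it is a genuine step. A cleaner route in the operator-valued setting, available in \cite{RS2}, is via operator-valued free cumulants: vanishing of mixed cumulants of free variables gives $\kappa_N(S_m b_1,\dots,S_m)=m^{1-N/2}\cdot\frac{1}{m}\sum_k\kappa_N(a_k b_1,\dots,a_k)$, so all cumulants of order $N\ge 3$ vanish in the limit and the second converges to $\eta$ by (ii); convergence of moments then follows from the finite moment--cumulant relations. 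That packaging absorbs exactly the partition bookkeeping you identify as the hard part.
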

The central limit in the above theorem which we shall denote it by $s$ is called a $B$-valued semicircular element in the context of operator-valued noncommutative probability, and, as in classical probability theory it is uniquely determined by its first two moments. Indeed, its $R-$ transform is of the form $$R_{s}(b)=D+\eta(b),\ (b\in B)$$ where $D=E_{B}(s)\in B$ is a self-adjoint element and $\eta:B\rightarrow B$ is a completely positive map given by $\eta(b)=E_{B}(sbs)-E_{B}(s)bE_{B}(s)\ (b\in B).$ Furthermore, for any completely positive map $\eta:B\rightarrow B$ there exists a $B-$valued semicircular random variable $s$ such that  $\eta(b)=E_{B}(sbs)-E_{B}(s)bE_{B}(s)\ (b\in B),$ ~\cite[Theorem 4.3.1.]{RS2}.\\

The following  result of Helton, Rashidi Far, and Speicher, shows that any operator-valued semicircular random variable can be uniquely determined by a functional equation involving only its first two moments, ~\cite{WH - RR - RR}:

\begin{theorem}
\label{th:7}
Let $A$ be a unital $C^{*}$-algebra, $B$ a $C^{*}$-subalgebra of $A$, and $s\in A$ be a self-adjoint $B$-valued semicircular random variable with first moment $D\in B$ and variance $\eta:B\rightarrow B.$ Then its associated operator-valued Cauchy transform $G_{s}:\mathbb{H}^{+}(B)\rightarrow \mathbb{H}^{-}(B)$ is the unique solution of the functional equation
\begin{eqnarray}
\label{eq:8}
b.G_{s}(b)=1+(D+\eta(G_{s}(b))).G_{s}(b),\ \ \ (b\in \mathbb{H}^{+}(B)),\\
\nonumber
\end{eqnarray}
together with asymptotic condition
\begin{eqnarray}
\label{eq:9}
\lim_{b^{-1}\rightarrow 0}b.G_{s}(b)=1.\\
\nonumber
\end{eqnarray}
\end{theorem}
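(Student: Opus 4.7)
The plan is to split the argument into existence and uniqueness. For existence, I would start directly from the defining relation of the $R$-transform stated just before the theorem, namely $b \cdot G_{s}(b)=1+R_{s}(G_{s}(b))\cdot G_{s}(b)$, and substitute the known formula $R_{s}(b)=D+\eta(b)$. This produces \eqref{eq:8} immediately. The asymptotic condition \eqref{eq:9} then falls out of the series expansion $G_{s}(b)=\sum_{n\geq 0}b^{-1}E_{B}((sb^{-1})^{n})=b^{-1}+b^{-1}E_{B}(s)b^{-1}+\cdots$ valid for $\|b^{-1}\|<\|s\|^{-1}$, since $b\cdot G_{s}(b)=1+E_{B}(s)b^{-1}+O(\|b^{-1}\|^{2})\to 1$.

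For uniqueness, I would rewrite \eqref{eq:8} as the fixed-point equation $G(b)=(b-D-\eta(G(b)))^{-1}$ and study, for each fixed $b\in\mathbb{H}^{+}(B)$, the map $\Phi_{b}:B\to B$, $\Phi_{b}(X)=(b-D-\eta(X))^{-1}$. The idea is to show that on a neighborhood of infinity inside $\mathbb{H}^{+}(B)$, $\Phi_{b}$ is a strict contraction on a small ball, so that any solution of \eqref{eq:8} satisfying \eqref{eq:9} is pinned down. Concretely, fix $R>0$ and estimate: if $\|X\|\leq R$ and $\|b^{-1}\|$ is small enough relative to $\|D\|$, $\|\eta\|$ and $R$, factor $b-D-\eta(X)=b(1-b^{-1}(D+\eta(X)))$ with $\|b^{-1}(D+\eta(X))\|<1/2$, so that $\|\Phi_{b}(X)\|\leq 2\|b^{-1}\|\leq R$. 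Using the resolvent identity
\[
\Phi_{b}(X_{1})-\Phi_{b}(X_{2})=(b-D-\eta(X_{1}))^{-1}\bigl(\eta(X_{1})-\eta(X_{2})\bigr)(b-D-\eta(X_{2}))^{-1},
\]
one gets $\|\Phi_{b}(X_{1})-\Phi_{b}(X_{2})\|\leq 4\|b^{-1}\|^{2}\|\eta\|\,\|X_{1}-X_{2}\|$, which is a contraction once $\|b^{-1}\|^{2}<(8\|\eta\|)^{-1}$.

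Now suppose $H:\mathbb{H}^{+}(B)\to\mathbb{H}^{-}(B)$ is any analytic solution satisfying both \eqref{eq:8} and \eqref{eq:9}. Condition \eqref{eq:9} forces $H(b)=b^{-1}+b^{-1}(bH(b)-1)\to 0$ as $b^{-1}\to 0$, and the same for $G_{s}$. Hence on the open set $U=\{b\in\mathbb{H}^{+}(B):\|b^{-1}\|<\delta\}$ with $\delta$ chosen small enough, both $G_{s}(b)$ and $H(b)$ lie in the ball of radius $R$ about $0$, and therefore both coincide with the unique fixed point of $\Phi_{b}$. Thus $G_{s}\equiv H$ on $U$. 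Since $\mathbb{H}^{+}(B)$ is convex (hence connected) and $G_{s},H$ are analytic there, the identity principle upgrades this local agreement to equality on all of $\mathbb{H}^{+}(B)$.

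The main obstacle I expect is the uniqueness half, specifically the bookkeeping needed to turn the asymptotic condition into the a priori bound $\|G(b)\|\leq R$ that makes the contraction argument trap the correct fixed point, and then the invocation of an identity principle for Banach-space-valued analytic functions on the open convex set $\mathbb{H}^{+}(B)$; the existence half reduces to a one-line substitution using results already quoted in the excerpt.
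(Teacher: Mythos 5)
The paper does not prove this statement at all: Theorem~\ref{th:7} is imported verbatim from Helton--Rashidi Far--Speicher \cite{WH - RR - RR} and used as a black box, so the only meaningful comparison is with the argument in that reference. Your proof is essentially correct but follows a genuinely different route. The existence half (substituting $R_{s}(b)=D+\eta(b)$ into the $R$-transform relation and reading off \eqref{eq:9} from the Neumann series of $G_{s}$) is the same one-line observation in both places. For uniqueness, however, \cite{WH - RR - RR} works \emph{pointwise}: for each fixed $b\in\mathbb{H}^{+}(B)$ they show that $W\mapsto(b-D-\eta(W))^{-1}$ is a holomorphic self-map of a bounded subdomain of $\mathbb{H}^{-}(B)$ that is a strict contraction for the Carath\'eodory metric, and invoke the Earle--Hamilton fixed point theorem; this yields a unique solution with $\mathrm{Im}\,W<0$ at \emph{every} $b$, with no asymptotic condition and no regularity hypothesis on the competitor. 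Your argument instead runs a Banach fixed-point contraction only near infinity and then propagates the agreement by the identity principle. This is more elementary (no invariant-metric machinery), and the estimates you give --- the Neumann-series bound $\|\Phi_{b}(X)\|\leq 2\|b^{-1}\|$, the resolvent identity giving the Lipschitz constant $4\|b^{-1}\|^{2}\|\eta\|$, and the deduction $\|H(b)\|\leq\|b^{-1}\|\,\|bH(b)\|\to 0$ from \eqref{eq:9} --- all check out; note also that passing from the one-sided relation $(b-D-\eta(H(b)))H(b)=1$ to $H(b)=\Phi_{b}(H(b))$ is legitimate exactly because in your regime $b-D-\eta(H(b))$ is invertible by Neumann series, so the right inverse is the inverse. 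The price you pay is twofold: you must assume the competitor $H$ is analytic on $\mathbb{H}^{+}(B)$ for the identity-principle step (the Earle--Hamilton route needs no such hypothesis), and your uniqueness genuinely uses condition \eqref{eq:9}, whereas in \cite{WH - RR - RR} the constraint $\mathrm{Im}\,G(b)<0$ alone already pins the solution down at each point. Since the theorem as stated here bundles \eqref{eq:9} into the characterization and $G_{s}$ is analytic by construction, your weaker uniqueness statement still establishes the theorem as it is used in this paper.
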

Let $A$ be a unital $C^{*}$-algebra, $B$ be a unital $C^{*}$-subalgebra of $A$, $E_{B}:A\rightarrow B$ be a conditional expectation, and $G_{a}:\mathbb{H}^{+}(B)\rightarrow\mathbb{H}^{-}(B)$ be the operator-valued Cauchy transform of the self-adjoint  random variable $a\in A.$ Let $\Phi:B\rightarrow\mathbb{C}$ be a given state on the $C^{*}$-algebra $B$ (In the case of $B=M_{n}(\mathbb{C})$, for some fixed $n\geq1,$ we can take $\Phi=tr_{n},$ the normalized trace.), and define a map $G\colon\mathbb{C}^{+}\rightarrow\mathbb{C}^{-}$ via:
$$
G(\xi)=(\Phi\circ G_{a})(\xi.1)\ \ \  \xi\in\mathbb{C}^{+}.
$$
Referring to the note after the Proposition \ref{pro:2}, it follows that there is a  probability measure $\mu=\mu_{a}\in \mathcal{B}_{fm}(\mathbb{R})$ on $\mathbb{R},$ which we call the  distribution of $a$, such that:
\begin{eqnarray}
\label{eq:13}
 G(\xi)=G_{\mu_{a}}(\xi)=\int_{\mathbb{R}}\frac{d\mu_{a}(t)}{\xi-t}\ \ \ \xi\in\mathbb{C}^{+}.
\end{eqnarray}
Note that the atoms of $\mu_{a}$ are determined via  Equation \eqref{eq:3}.

\section{Representation of  the Cauchy Transform Using Semicircular Random Variables}
$$$$
The proof of the following  results are based on the Theorem \ref{th:7} of Section 2. Indeed, for given $D$ and $\eta$ as in that theorem, we shall verify the conditions \eqref{eq:8} and \eqref{eq:9} for $b=\xi.1\ \ \xi\in\mathbb{C}^{+}$ and $G_{s}(b)$ a diagonal matrix of complex analytic functions. Then, Theorem \ref{th:7} of Section 2 will guarantee us that there is a semicircular random variable $s$ with first moment $D$ and variance $\eta$ so that $G_{s}(b)$ is the restriction of the operator-valued Cauchy transform of $s$ to $\mathbb{C}^{+}.1.$ Next, our variances $\eta$ will be explicitly constructed as $\eta(b)=v^{*}bv$, ~\cite[Theorem 4.1.]{VP} with $v$ obtained from the Jacobi coefficients of the given compactly supported probability measure.
\newline\\
The first  result deals with the finite dimensional matrix-valued representations of the  Cauchy transform. Here, we consider $\ell_{2}^{n}$ as $\mathbb{C}^n$ with its canonical orthonormal basis $\{e_k\}_{k=1}^{n}.$
\begin{proposition}
\label{pro:11}
Let $\mu$ be a probability measure with compact support in $\mathbb{R}.$ Then there exist two sequences $s_{n}^{(1)}$ and $s_{n}^{(2)}\ (n\geq1)$of self-adjoint operator valued semicircular random variables with associated operator-valued Cauchy transforms $G_{s_{n}^{(1)}}:\mathbb{H}^{+}(M_{n}(\mathbb{C}))\rightarrow \mathbb{H}^{-}(M_{n}(\mathbb{C}))$ and $G_{s_{n}^{(2)}}:\mathbb{H}^{+}(M_{n}(\mathbb{C}))\rightarrow \mathbb{H}^{-}(M_{n}(\mathbb{C}))\ (n\geq1)$ such that the Cauchy transform $G_{\mu}:\mathbb{C}^{+}\rightarrow\mathbb{C}^{-}$ is represented as:
\begin{eqnarray}
\label{eq:10}
&&G_{\mu}(\xi)=\lim_{n\rightarrow\infty}\langle G_{s_{n}^{(1)}}(\xi.1_n)e_{1},e_{1} \rangle_{\ell_{2}^{n}}\ \ \xi\in\mathbb{C}^{+},\\
\nonumber
\end{eqnarray}
and
\begin{eqnarray}
\label{eq:11}
&&G_{\mu}(\xi)=\lim_{n\rightarrow\infty}\langle G_{s_{n}^{(2)}}(\xi.1_n)e_{n},e_{n} \rangle_{\ell_{2}^{n}}\ \ \xi\in\mathbb{C}^{+}.\\
\nonumber
\end{eqnarray}
\end{proposition}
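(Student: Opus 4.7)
The plan is to realize $s_n^{(1)}$ and $s_n^{(2)}$ as Jacobi-type matrix-valued semicirculars whose Cauchy transforms at $\xi\cdot 1_n$ decouple into scalar equations encoding the approximants of the continued fraction for $G_{\mu}$ furnished by Theorem~\ref{th:8}. I would begin by extracting the Jacobi coefficients $(\{\alpha_m\},\{\omega_m\})$ of $\mu$ via Theorem 1.4, with boundedness provided by~\eqref{ineq:5}.

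For the first construction, put $D_n^{(1)} = \mathrm{diag}(\alpha_1,\ldots,\alpha_n) \in M_n(\mathbb{C})$ and $v_n^{(1)} = \sum_{i=1}^{n-1}\sqrt{\omega_i}\,e_{i+1}e_i^{*}$, and define $\eta_n^{(1)}(b) = (v_n^{(1)})^{*}bv_n^{(1)}$. Since $\eta_n^{(1)}$ has Stinespring form it is completely positive, so the cited existence result \cite[Theorem 4.3.1]{RS2} yields a self-adjoint $M_n(\mathbb{C})$-valued semicircular $s_n^{(1)}$ with first moment $D_n^{(1)}$ and variance $\eta_n^{(1)}$. A direct matrix computation gives, for diagonal $b = \mathrm{diag}(b_1,\ldots,b_n)$,
$$\eta_n^{(1)}(b) = \mathrm{diag}(\omega_1 b_2, \omega_2 b_3, \ldots, \omega_{n-1}b_n, 0),$$
so $\eta_n^{(1)}$ preserves the diagonal subalgebra. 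Since $D_n^{(1)}$ is also diagonal, the fixed-point iteration $F_{k+1}(\xi) = (\xi\cdot 1_n - D_n^{(1)} - \eta_n^{(1)}(F_k(\xi)))^{-1}$ starting from $F_0 = \xi^{-1}\cdot 1_n$ remains within diagonal matrices; its limit for large $|\xi|$ is $G_{s_n^{(1)}}(\xi\cdot 1_n)$ by Theorem~\ref{th:7}, so the latter is diagonal, extended to $\mathbb{C}^{+}$ by analyticity. Writing $G_{s_n^{(1)}}(\xi\cdot 1_n) = \mathrm{diag}(g_1^{(n)}(\xi),\ldots,g_n^{(n)}(\xi))$, the functional equation~\eqref{eq:8} at $b = \xi\cdot 1_n$ reduces to
$$\xi g_i^{(n)} = 1 + (\alpha_i + \omega_i g_{i+1}^{(n)})g_i^{(n)} \quad (i<n), \qquad \xi g_n^{(n)} = 1 + \alpha_n g_n^{(n)},$$
which solved by backward recursion (with the asymptotic normalization~\eqref{eq:9} selecting the physical branch $g_i^{(n)}(\xi)\sim\xi^{-1}$) gives exactly the $n$-level truncation of the continued fraction in Theorem~\ref{th:8}. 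Thus $\langle G_{s_n^{(1)}}(\xi\cdot 1_n)e_1, e_1\rangle_{\ell_{2}^{n}} = g_1^{(n)}(\xi)$ is the $n$-th approximant of $G_{\mu}(\xi)$ and converges to $G_{\mu}(\xi)$ by Theorem~\ref{th:8}, proving~\eqref{eq:10}.

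For the second assertion I would mirror the construction with reversed labelling: $D_n^{(2)} = \mathrm{diag}(\alpha_n,\ldots,\alpha_1)$ and $v_n^{(2)} = \sum_{i=1}^{n-1}\sqrt{\omega_{n-i}}\,e_i e_{i+1}^{*}$, so that $\eta_n^{(2)}(b) = \mathrm{diag}(0,\omega_{n-1}b_1,\omega_{n-2}b_2,\ldots,\omega_1 b_{n-1})$. The same diagonal-preservation argument now yields a \emph{forward} recursion whose final entry $g_n^{(n)}(\xi)$ is the $n$-th continued-fraction approximant, giving~\eqref{eq:11}. The main technical hurdle in either case is justifying that $G_{s_n^{(\cdot)}}(\xi\cdot 1_n)$ is genuinely diagonal; once that is secured the scalar Jacobi recursion is forced by~\eqref{eq:8}, and convergence is handed to us by Theorem~\ref{th:8} together with the boundedness~\eqref{ineq:5} of the Jacobi parameters.
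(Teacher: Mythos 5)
Your proposal is correct and follows essentially the same route as the paper: build $D_n$ and a rank-one Stinespring variance $\eta_n$ from the Jacobi coefficients so that the functional equation \eqref{eq:8} at $b=\xi\cdot 1_n$ decouples into the scalar backward (resp.\ forward) recursion producing the $n$-th convergent of the continued fraction of Theorem \ref{th:8} in the $(1,1)$ (resp.\ $(n,n)$) entry. Your explicit fixed-point argument for why $G_{s_n^{(\cdot)}}(\xi\cdot 1_n)$ is diagonal is a welcome addition of detail that the paper leaves implicit, but the construction and the convergence argument are the same.
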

\begin{proof}
Let $$
G_{\mu}(\xi)=\frac{1}{\xi-\alpha_1-\displaystyle{\frac{\omega_1}{\xi-\alpha_2-\displaystyle{\frac{\omega_2}{{\xi-\alpha_3-}_{\displaystyle{\ddots}-\displaystyle{\frac{\omega_{n-1}}{\xi-\alpha_n-\displaystyle{\frac{\omega_n}{\xi-\alpha_{n+1}-}}_{\displaystyle{\ddots}}}}}}}}}}
$$
be the continued fraction representation of $G_{\mu}$ as in Theorem \ref{th:8}. To prove Equation \eqref{eq:10}, fix positive integer $n\geq1,$ then define $b=\xi.1_n,$ $D_{n}^{(1)}=(\alpha_{k}\delta_{kl})_{k,l=1}^{n}$ and the completely positive map $\eta_{n}^{(1)}$ via :
\begin{eqnarray}
&&\eta_{n}^{(1)}:M_{n}(\mathbb{C})\rightarrow M_{n}(\mathbb{C})\nonumber\\
&&\eta_{n}^{(1)}\big((a_{kl})_{k,l=1}^{n}\big)=(\omega_{k}^{\frac{1}{2}}\delta_{(k+1)l})_{k,l=1}^{n}(a_{kl})_{k,l=1}^{n}(\omega_{k-1}^{\frac{1}{2}}\delta_{k(l+1)})_{k,l=1}^{n}.\nonumber
\nonumber
\end{eqnarray}
Then, for the self-adjoint semicircular element $s_{n}^{(1)}$ with operator-valued Cauchy transform $G_{s_{n}^{(1)}}$ satisfying the functional equation \eqref{eq:8} of the form:
$$bG_{s_{n}^{(1)}}(b)=1+(D_{n}^{(1)}+\eta_{n}^{(1)}(G_{s_{n}^{(1)}}(b)))G_{s_{n}^{(1)}}(b),$$
we have $G_{s_{n}^{(1)}}(b)=(g_{n,n-k+1}(\xi)\delta_{kl})_{k,l=1}^{n}$ where :
$$
g_{n,n-k+1}(\xi)=\frac{1}{\xi-\alpha_1-\displaystyle{\frac{\omega_1}{\xi-\alpha_2-\displaystyle{\frac{\omega_2}{{\xi-\alpha_3-}_{\displaystyle{\ddots}-\displaystyle{\frac{\omega_{n-k+1}}{\xi-\alpha_{n-k+1}}}}}}}}}\ \ 1\leq k\leq n,\ \xi\in\mathbb{C}^{+},
$$
which can be identified as $(n-k+1)^{th}$ convergent of $G_{\mu}.$
Accordingly:
\begin{eqnarray}
&&G_{\mu}(\xi)=\lim_{n\rightarrow\infty}g_{n,n}(\xi)=\lim_{n\rightarrow\infty}\langle G_{s_{n}^{(1)}}(\xi.1_n)e_{1},e_{1} \rangle_{\ell_{2}^{n}}\ \ \xi\in\mathbb{C}^{+}.\nonumber
\nonumber
\end{eqnarray}
The proof of Equation \eqref{eq:11} is analogous by considering a fixed positive integer $n\geq1,$ then defining $b=\xi.1_n,$ $D_{n}^{(2)}=(\alpha_{n+1-k}\delta_{kl})_{k,l=1}^{n}$ and the completely positive map $\eta_{n}^{(2)}$ via :
\begin{eqnarray}
&&\eta_{n}^{(2)}:M_{n}(\mathbb{C})\rightarrow M_{n}(\mathbb{C})\nonumber\\
&&\eta_{n}^{(2)}\big((a_{kl})_{k,l=1}^{n}\big)=(\omega_{n-k+1}^{\frac{1}{2}}\delta_{k(l+1)})_{k,l=1}^{n}(a_{kl})_{k,l=1}^{n}(\omega_{n-k}^{\frac{1}{2}}\delta_{(k+1)l})_{k,l=1}^{n}.\nonumber
\nonumber
\end{eqnarray}
\end{proof}
The following theorem deals with the infinite dimensional matrix valued representation of the Cauchy transform. Here, we denote by $B(\ell_2(\mathbb{N}))$ the space of bounded operators on the separable Hilbert space $\ell_2(\mathbb{N}),$ and we  consider the orthonormal basis $\{e_n\}_{n=1}^{\infty}$ of $\ell_2(\mathbb{N})$ given by $e_n=\{\delta_{mn}\}_{m=1}^{\infty}\ (n\geq1).$
\begin{theorem}
\label{th: 12}
Let $\mu$ be a compactly supported probability measure in $\mathbb{R}$ with Jacobi coefficients $(\{\omega_n\}_{n=1}^{\infty},\{\alpha_n\}_{n=1}^{\infty})$. Then  there exist a self-adjoint $B(\ell_{2}(\mathbb{N}))-$valued semicircular random variable $s$ with first moment $D=(\alpha_{k}\delta_{kl})_{k,l=1}^{\infty}\in B(\ell_{2}(\mathbb{N})),$ and variance
\begin{eqnarray}
&&\eta:B(\ell_{2}(\mathbb{N}))\rightarrow B(\ell_{2}(\mathbb{N}))\nonumber\\
&&\eta((a_{kl})_{k,l=1}^{\infty})=(\omega_{k}^{\frac{1}{2}}\delta_{(k+1)l})_{k,l=1}^{\infty}(a_{kl})_{k,l=1}^{\infty}(\omega_{k-1}^{\frac{1}{2}}\delta_{k(l+1)})_{k,l=1}^{\infty},\nonumber
\nonumber
\end{eqnarray}
and an state $\rho:B(\ell_{2}(\mathbb{N}))\rightarrow\mathbb{C}$  such that the  Cauchy transform $G_{\mu}:\mathbb{C}^{+}\rightarrow\mathbb{C}^{-}$ is represented as:
$$
G_{\mu}(\xi)=(\rho\circ G_{s})(\xi.1)\ \ \xi\in\mathbb{C}^{+}.
$$
\end{theorem}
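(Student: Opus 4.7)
The plan is to mimic the finite-dimensional argument in Proposition~\ref{pro:11}, constructing a candidate operator-valued Cauchy transform $G(\xi)$ as a diagonal operator on $\ell_2(\mathbb{N})$ and then invoking the uniqueness part of Theorem~\ref{th:7} together with the cited existence of a semicircular element for any prescribed completely positive variance. Concretely, for each $k \geq 1$ I would let $g_k(\xi)$ denote the tail continued fraction
\[
g_k(\xi) \;=\; \cfrac{1}{\xi - \alpha_k - \cfrac{\omega_k}{\xi - \alpha_{k+1} - \cfrac{\omega_{k+1}}{\ddots}}}\,,
\]
set $G(\xi) := \mathrm{diag}(g_k(\xi))_{k=1}^{\infty}$, and take $\rho$ to be the vector state $\rho(a) := \langle a e_1, e_1 \rangle$, so that $\rho(G(\xi)) = g_1(\xi)$ matches $G_\mu(\xi)$ by Theorem~\ref{th:8}.

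To make this rigorous I first need each $g_k$ to be a well-defined analytic function from $\mathbb{C}^+$ into $\mathbb{C}^-$. Here I would observe that the shifted sequences $(\{\omega_n\}_{n \geq k},\{\alpha_n\}_{n \geq k})$ remain bounded by $M := 2\sup_{t \in \mathrm{Supp}(\mu)}|t|$ in view of inequality~\eqref{ineq:5}, so the standard Hamburger moment problem theory (via Carleman's criterion) produces a compactly supported probability measure $\mu_k$ on $[-M,M]$ whose Jacobi coefficients are exactly these shifted sequences; Theorem~\ref{th:8} then identifies $g_k = G_{\mu_k}$. This forces $|g_k(\xi)| \leq 1/\mathrm{Im}(\xi)$ uniformly in $k$, and hence $G(\xi)$ defines a bounded diagonal operator in $\mathbb{H}^-(B(\ell_2(\mathbb{N})))$.

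Next I would verify the two hypotheses of Theorem~\ref{th:7} with $b = \xi \cdot 1$. A short matrix computation shows that $\eta$ sends a diagonal $(a_k \delta_{kl})$ to the diagonal $(\omega_k a_{k+1} \delta_{kl})$; combined with $D = \mathrm{diag}(\alpha_k)$, the functional equation \eqref{eq:8} reduces entry-wise to the scalar recursion $g_k = 1/(\xi - \alpha_k - \omega_k g_{k+1})$, which holds by construction of the tail fractions. For the asymptotic condition \eqref{eq:9} I would use the uniform bound
\[
|\xi g_k(\xi) - 1| \;=\; \Big|\int \frac{t}{\xi - t}\,d\mu_k(t)\Big| \;\leq\; \frac{M}{|\xi|-M} \qquad (|\xi| > M),
\]
which is independent of $k$ and forces $\xi G(\xi) \to I$ in operator norm. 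Since $\eta(a) = V^* a V$ for the bounded weighted shift extracted from the $\omega_n^{1/2}$, $\eta$ is completely positive, so the cited existence result supplies a self-adjoint semicircular $s$ with first moment $D$ and variance $\eta$; the uniqueness clause of Theorem~\ref{th:7} identifies $G_s|_{\mathbb{C}^+ \cdot 1}$ with $G$, and evaluating $\rho$ at the top-left corner closes the argument.

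The step I expect to require most care is this infinite-dimensional uniformity: in Proposition~\ref{pro:11} one works with honest polynomial convergents, whereas here I must argue that each tail continued fraction converges to a bona fide scalar Cauchy transform and that the resulting diagonal operator satisfies the operator-norm asymptotic \eqref{eq:9} uniformly in the diagonal index. The common bound $M$ from inequality~\eqref{ineq:5}, which pins all $\mu_k$ into the same compact interval $[-M,M]$, is what makes both issues tractable; without such uniform support control the candidate $G(\xi)$ would not obviously define a bounded operator, and the passage from truncated matrix models to the genuine $B(\ell_2(\mathbb{N}))$-valued setting would fail.
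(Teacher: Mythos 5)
Your proposal follows essentially the same route as the paper: build the diagonal operator whose $k$-th entry is the $k$-th tail of the continued fraction of $G_\mu$, verify the functional equation \eqref{eq:8} and the asymptotic condition \eqref{eq:9} entrywise using the uniform bound \eqref{ineq:5}, invoke Theorem \ref{th:7} for uniqueness together with the existence of a semicircular element with the prescribed variance $\eta(a)=V^*aV$, and take $\rho=\langle\,\cdot\,e_1,e_1\rangle$. Your write-up actually supplies the uniformity details (identifying each tail as the Cauchy transform of a compactly supported $\mu_k$ and the resulting operator-norm control) that the paper's proof leaves implicit, but the argument is the same.
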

\begin{proof}
Let
$$
G_{\mu}(\xi)=\frac{1}{\xi-\alpha_1-\displaystyle{\frac{\omega_1}{\xi-\alpha_2-\displaystyle{\frac{\omega_2}{{\xi-\alpha_3-}_{\displaystyle{\ddots}-\displaystyle{\frac{\omega_{n-1}}{\xi-\alpha_n-\displaystyle{\frac{\omega_n}{\xi-\alpha_{n+1}-}}_{\displaystyle{\ddots}}}}}}}}}}
$$
be the continued fraction representation of $G_{\mu}$ as in Theorem \ref{th:8} and consider given $D$ and $\eta$ where using inequality \eqref{ineq:5} both of them are bounded in their corresponding norms. Now, as in proof of Proposition \ref{pro:11}, we observe that the diagonal matrix $$G_{s}(b)=(x_{kl}\delta_{kl})_{k,l=1}^{\infty},\ \ \ b=\xi.1\in B(\ell_{2}(\mathbb{N}))$$
with entries $x_{nn}$ of the form
$$
x_{nn}=\frac{1}{\xi-\alpha_n-\displaystyle{\frac{\omega_n}{\xi-\alpha_{n+1}-\displaystyle{\frac{\omega_{n+1}}{{\xi-\alpha_{n+2}}-\displaystyle{\frac{\omega_{n+2}}{\xi-\alpha_{n+3}-\displaystyle{\frac{\omega_{n+3}}{\xi-\alpha_{n+4}-}}_{\displaystyle{\ddots}}}}}}}}}
\ n\geq1,$$
satisfies the equation \eqref{eq:8} with condition \eqref{eq:9}. Consequently, for the state $\rho:B(\ell_{2}(\mathbb{N}))\rightarrow\mathbb{C}$ defined by:
$$
\rho(T)=\langle T(e_1),e_1\rangle_{\ell_{2}(\mathbb{N})}
$$
the assertion follows.
\end{proof}
\section{Atoms of Distributions of Matrix-Valued Semicircular Random Variables}
$$$$
In this section, the existence of atoms of distributions of finite dimensional matrix-valued semicircular random variables is discussed. First of all, we give a sufficient condition on the variance of a centered semicircular random variable so that its associated probability measure has atom.
\begin{theorem}
\label{th:9}
Let $G_{s}:\mathbb{H}^{+}(M_n(\mathbb{C}))\rightarrow\mathbb{H}^{-}(M_n(\mathbb{C}))$ be the operator valued Cauchy transform of a $M_{n}(\mathbb{C})$-valued semicircular random variable $s$ satisfying the functional equation \eqref{eq:8},  $b\in \mathbb{H}^{+}(M_n(\mathbb{C})), $ where  $D=0$  and $\eta:M_n(\mathbb{C})\rightarrow M_n(\mathbb{C})$ is a nilpotent completely positive map. Then the associated probability measure $\mu_{s}$ to $G_{s}$ has at least one atom .
\end{theorem}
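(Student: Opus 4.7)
The plan is to prove that $\mu_s$ has an atom located at $0$ by exploiting nilpotency of $\eta$ to collapse the matrix-valued functional equation \eqref{eq:8} on a nontrivial subspace. With $D=0$ and $b=\xi\cdot 1_n$, that equation reads $\xi g(\xi) = 1_n + \eta(g(\xi))\,g(\xi)$, where $g(\xi):=G_s(\xi\cdot 1_n)\in M_n(\mathbb{C})$; by \eqref{eq:3}, the mass at $0$ equals $\mu_s(\{0\})=\lim_{\xi\to 0,\,\xi\in\Gamma_\alpha(0)}\xi\,tr_n(g(\xi))$, so it is enough to bound this limit strictly away from zero.

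The algebraic heart of the argument is the construction of a nonzero projection $P\in M_n(\mathbb{C})$ that both left- and right-annihilates the image of $\eta$. First, $\eta(1_n)$ cannot be invertible: if $\eta(1_n)\geq \varepsilon\cdot 1_n$ for some $\varepsilon>0$, iterating positivity gives $\eta^k(1_n)\geq\varepsilon^k\cdot 1_n>0$, contradicting $\eta^N=0$. Let $P$ be the spectral projection of $\eta(1_n)$ corresponding to the eigenvalue $0$, of some rank $k\in\{1,\ldots,n\}$. Then for positive $b$ with $\|b\|\leq 1$ and any $x\in P\mathbb{C}^n$, the bound $\eta(b)\leq\|b\|\eta(1_n)$ forces $\langle\eta(b)x,x\rangle\leq\langle\eta(1_n)x,x\rangle=0$; since $\eta(b)\geq 0$, this yields $\eta(b)P=0$. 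Taking adjoints and using that $\eta$ preserves selfadjointness gives $P\eta(b)=0$, and both identities extend to arbitrary $b\in M_n(\mathbb{C})$ by linearity.

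With $P$ so chosen, left-multiplying the functional equation by $P$ kills the nonlinear term and gives $\xi Pg(\xi)=P$, i.e.\ $Pg(\xi)=P/\xi$. Since $g(\xi)$ is the two-sided inverse of $\xi\cdot 1_n-\eta(g(\xi))$, the companion identity $\xi g(\xi)=1_n+g(\xi)\eta(g(\xi))$ also holds; right-multiplying by $P$ gives $g(\xi)P=P/\xi$, and hence $Pg(\xi)P=P/\xi$. By cyclicity, $\mathrm{Tr}(g(\xi))=\mathrm{Tr}(PgP)+\mathrm{Tr}(P^\perp g P^\perp)=k/\xi+\mathrm{Tr}(P^\perp g(\xi)P^\perp)$. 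Assuming $k<n$ (the case $k=n$ forces $\eta=0$, giving $g(\xi)=1_n/\xi$ and $\mu_s=\delta_0$ trivially), the map $\phi(a):=\tfrac{1}{n-k}\mathrm{Tr}(P^\perp a P^\perp)$ is a state on $M_n(\mathbb{C})$, so by the discussion culminating in \eqref{eq:13}, $\phi\circ g$ is the Cauchy transform of some compactly supported probability measure $\nu$ on $\mathbb{R}$. Combining these,
\[
\xi\,G_{\mu_s}(\xi) \;=\; \frac{k}{n} \;+\; \frac{n-k}{n}\cdot\xi\,G_\nu(\xi).
\]
Letting $\xi\to 0$ through $\Gamma_\alpha(0)$ and applying \eqref{eq:3} to both sides yields $\mu_s(\{0\})=k/n+\tfrac{n-k}{n}\nu(\{0\})\geq k/n>0$.

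The main obstacle is the algebraic step producing $P$ and showing it annihilates $\eta(b)$ on both sides: this requires combining nilpotency (to rule out invertibility of $\eta(1_n)$) with positivity in a careful sandwich-to-one-sided upgrade, and is the only place nilpotency is really used. Once this structural fact is in hand, the decoupling of the functional equation and the reduction to the atom formula \eqref{eq:3} via the state $\phi$ are mechanical.
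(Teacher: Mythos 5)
Your proof is correct, and it takes a genuinely different route from the paper's. The paper works inside the domain of the map: it takes the last nonzero power $\eta^{m-1}$, studies the set of positive elements of $\ker(\eta^{m-1})\subseteq M_n(\mathbb{C})$, builds (via the Schwarz inequality for completely positive maps and an iterated-square-root limit) a maximal projection $q\neq 1$ that acts as a unit on that kernel, then shows $\frac{1}{i}\bigl(G_s(b)^{-1}-b\bigr)=\eta(G_s(b))$ is a positive element of $\ker(\eta^{m-1})$ for $b=iy\cdot 1$, which decouples $G_s(b)=b^{-1}(1-q)+qG_s(b)q$ and yields $\mu_s(\{0\})=tr_n(1-q)>0$. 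You instead work in $\mathbb{C}^n$: you use nilpotency only to force $\eta(1_n)$ to be singular, take $P$ the projection onto $\ker\eta(1_n)$, and upgrade $0\le\eta(b)\le\|b\|\,\eta(1_n)$ to the two-sided annihilation $P\eta(\cdot)=\eta(\cdot)P=0$, after which both one-sided forms of the functional equation give $Pg(\xi)=g(\xi)P=P/\xi$ and hence $\mu_s(\{0\})\ge \mathrm{rank}(P)/n$. Your argument buys several things: it avoids the paper's maximal-projection construction (whose "repeating this process" step is the least rigorous part of the published proof); it isolates the true hypothesis, since you only need $\eta(1_n)$ non-invertible rather than $\eta$ nilpotent, so you actually prove a strictly more general statement (e.g.\ it covers $\eta(a)=v^*av$ with $v$ singular but not nilpotent); and it gives an explicit lower bound $\dim\ker\eta(1_n)/n$ for the atom, which matches the exact values in the paper's examples (the $2\times 2$ example of Speicher and the $3\times 3$ example in Remark (iii)). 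What the paper's approach offers in exchange is an exact identification of the contribution $tr_n(1-q)$ coming from the kernel of $\eta^{m-1}$, at the cost of a more delicate and less transparent argument. All the individual steps you use --- the invertibility of $g(\xi)$ turning the right inverse into a two-sided inverse, the cyclicity computation $\mathrm{Tr}(g)=\mathrm{Tr}(PgP)+\mathrm{Tr}(P^{\perp}gP^{\perp})$, and the fact that composing $G_s$ with the state $\phi(a)=\tfrac{1}{n-k}\mathrm{Tr}(P^{\perp}aP^{\perp})$ produces a Cauchy transform of a probability measure --- are sound and consistent with the machinery set up in Section 2 of the paper.
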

\begin{proof}
Let $\eta,\cdots,\eta^{m-1}\neq0$ and $\eta^{m}=0,$ for some $m\geq1.$ Writing the functional equation in the form of $b-{G_{s}(b)}^{-1}=\eta(G_{s}(b))$ for $\|b^{-1}\|<<\infty$ it follows that:
$$\eta^{m-1}(b-{G_{s}(b)}^{-1})=\eta^{m}(G_{s}(b))=0\hspace{1 cm}Im(b)>0,\ \ \|b^{-1}\|<<\infty.$$
Now, if $\ker(\eta^{m-1})=\LARGE 0,$ then $G_{s}(b)=b^{-1}$ and using Equations \eqref{eq:3} and \eqref{eq:13} it follows that
$$\mu_{s}(\{0\})=1, $$
proving the assertion. Hence, we may assume $\ker(\eta^{m-1})\neq\LARGE 0.$ Pick $0\neq c\in \ker(\eta^{m-1})\cap M_{n}^{+}(\mathbb{C})$ with $\|c\|=1.$ Then by Schwarz inequality for completely positive maps ~\cite[p. 40]{VP}, it follows that:
$$\eta^{m-1}(c^{\frac{1}{2^n}})^{*}\eta^{m-1}(c^{\frac{1}{2^n}})\leq\|\eta^{m-1}(1)\|\eta^{m-1}(c^{\frac{1}{2^{n-1}}})\hspace{1 cm}n\geq1,$$
and by induction we conclude that $\eta^{m-1}(c^{\frac{1}{2^n}})=0\ (n\geq1).$ By defining: $$p:=^{s.o.t}\lim_{n\rightarrow\infty}c^{\frac{1}{2^n}},$$ it follows that $p$ is a projection in $\ker(\eta^{m-1}).$\\

Claim (1): There exists a unique projection $1\neq q\in\ker(\eta^{m-1})$ such that for any projection $p\in\ker(\eta^{m-1})$ we have: $p\leq q.$\\

We showed that there is at least one projection $p$ in  $\ker(\eta^{m-1}).$ Let $p_1,p_2$ be two projections in $\ker(\eta^{m-1}).$ Then :
$$\eta^{m-1}((p_1+p_2)^{\frac{1}{2^n}})^{*}\eta^{m-1}({(p_1+p_2)}^{\frac{1}{2^n}})\leq\|\eta^{m-1}(1)\|\eta^{m-1}((p_1+p_2)^{\frac{1}{2^{n-1}}})\ \ n\geq1,$$
and by induction it follows that $\eta^{m-1}((p_1+p_2)^{\frac{1}{2^n}})=0\ (n\geq1).$ Now, define $p_3:=^{s.o.t}\lim_{n\rightarrow\infty}(p_1+p_2)^{\frac{1}{2^n}},$ then $p_3$ is a projection in $\ker(\eta^{m-1}).$ On the other hand $$(p_1+p_2)^{\frac{1}{2^n}}\geq p_1,p_2\ \ \ (n\geq1),$$ yielding $p_3\geq p_1,p_2.$ Next, using the maximality argument and  by repeating this process there will be a unique maximal projection $q$ in $\ker(\eta^{m-1})$ such that for any other projection $p$ in it, we have $p\leq q.$ Finally, if $q=1,$ then using the same Schwarz inequality as above, and the canonical decomposition of elements of $M_{n}(\mathbb{C})$ into its positive elements it follows that $\eta^{m-1}=0,$ in contradiction to our hypothesis.\\

Claim(2): For any $0\neq c\in \ker(\eta^{m-1})\cap M_{n}^{+}(\mathbb{C}),$ we have $cq=qc=c.$\\

Indeed, since $c\in M_{n}^{+}(\mathbb{C}),$ by spectral theorem we have:
$$
c=\sum_{k=1}^{N}\lambda_{k}p_{k}
$$
where the projections $p_k$'s satisfy $\sum_{k=1}^{N}p_{k}=1,\ \  p_{k_1}p_{k_2}=0\ (1\leq k_1\neq k_2\leq N)$ and $\lambda_k\geq0\ (1\leq k\leq N).$ Now, define:
$$
r:=^{s.o.t}\lim_{n\rightarrow\infty}c^{\frac{1}{2^n}}.
$$
Then it follows that $r=\sum_{\lambda_{k}\neq0}p_k,$ yielding $p_k\leq r\ (\lambda_k\neq0).$ On the other hand, by definition of $q$ we have $r\leq q$ and hence $p_{k}\leq q\ (\lambda_k\neq0).$ But all of $p_{k}\leq q\ (\lambda_k\neq0),$ and $q$ are projections and, consequently, $p_{k}q=qp_{k}=p_{k}\ (\lambda_k\neq0).$ Now, by multiplying all sides by $\lambda_k\ (1\leq k\leq N),$ and taking summation the claim is proved.\\

Next, take $c_0=\frac{1}{i}({G_{s}(b)}^{-1}-b)$ where $b=iy.1\ (y>0).$ Then using the fact that $s$ is centered, $E_{B}(s^{2m-1})=0\ \
 m\geq1,$ by
\begin{eqnarray}
Re(G_{s}(b))&=&Re\Big(\sum_{m=0}^{\infty}b^{-1}E_{B}((sb^{-1})^m)\Big)=Re\Big(\sum_{m=0}^{\infty}i^{m+1}(-y)^{m+1}E_{B}(s^m)\Big)\nonumber\\
&=&\sum_{m=1}^{\infty}(-y^2)^{m}E_{B}(s^{2m-1})=0,\ \ \ y>\|s\|\nonumber
\nonumber
\end{eqnarray}
it follows that
$$
{G_{s}(b)}^{-1}-b=\Big( Re(G_{s}(b))+i.Im(G_{s}(b))\Big)^{-1}-b=i.\Big(-{Im(G_{s}(b))}^{-1}-\frac{b}{i}\Big),
$$
and, hence $c_0=\frac{1}{i}({G_{s}(b)}^{-1}-b)=Im(({G_{s}(b)}^{-1}-b))\geq 0.$ Now, by claim (2) for $c=c_0$ we have that: $$G_{s}(b)(1-q)=b^{-1}(1-q)=(1-q)G_{s}(b)\ \ \textrm{and}\ \ G_{s}(b)q=qG_{s}(b),$$ and hence:
\begin{eqnarray}
G_{s}(b)&=&(1-q)G_{s}(b)(1-q)+qG_{s}(b)q\nonumber\\
&=&b^{-1}(1-q)+qG_{s}(b)q\hspace{1 cm}b=iy.1\ (y>0), \|b^{-1}\|<<\infty.\nonumber
\nonumber
\end{eqnarray}
Now, applying  analytic continution for the complex function $tr_{n}\circ G_{s}|_{\mathbb{C}^{+}.1}:\mathbb{C}^{+}\rightarrow\mathbb{C}^{-}$ we conclude that:
$$
(tr_{n}\circ G_{s})(\xi.1)=tr_{n}(\xi^{-1}(1-q)+qG_{s}(\xi.1)q)\ \ \ \xi\in\mathbb{C}^{+},
$$
and, consequently, by another application of the Equations \eqref{eq:3} and \eqref{eq:13}:
$$ \mu_{s}(\{0\})=tr_{n}(1-q)>0,$$
completing the proof.
\end{proof}
Before stating a more concrete and special case of the above theorem, using M. D. Choi's representation of a completely positive map from a matrix algebra to another matrix algebra ~\cite{MC}, we have:
\begin{lemma}
Let $\eta$ be a completely positive map defined by
\begin{eqnarray}
&&\eta:M_n(\mathbb{C})\rightarrow M_n(\mathbb{C})\nonumber\\
&&\eta(a)=\sum_{j=1}^{n^2}a_jaa_{j}^{*},\ \ a_j\in M_n(\mathbb{C})\ (1\leq j\leq n^2).\nonumber
\nonumber
\end{eqnarray}
Then:
\newline\\
(i) if the map $\eta$ is nilpotent, then all matrices $a_{j}\ (1\leq j\leq n^2)$ are nilpotent,\\
(ii) if all matrices $a_{j}\ (1\leq j\leq n^2)$ are nilpotent and commute with each other, then the map $\eta$ is nilpotent.
\end{lemma}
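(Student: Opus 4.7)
The plan is to exploit the explicit formula for the iterates of $\eta$. A direct induction on $m$ shows that
$$\eta^m(a)=\sum_{(j_1,\ldots,j_m)\in\{1,\ldots,n^2\}^m} a_{j_1}a_{j_2}\cdots a_{j_m}\, a\, a_{j_m}^{*}\cdots a_{j_2}^{*}a_{j_1}^{*},$$
so the Kraus-style expansion is inherited by every power of $\eta$. This single identity drives both directions of the lemma.

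For part (i), I would evaluate at $a=1_n$. Assuming $\eta^m=0$, one obtains
$$0=\eta^m(1_n)=\sum_{(j_1,\ldots,j_m)} (a_{j_1}\cdots a_{j_m})(a_{j_1}\cdots a_{j_m})^{*}.$$
This is a sum of positive semidefinite matrices summing to zero, so each summand vanishes; since $XX^{*}=0$ forces $X=0$, we conclude $a_{j_1}\cdots a_{j_m}=0$ for every tuple $(j_1,\ldots,j_m)$. Specializing to the constant tuple $j_1=\cdots=j_m=j$ yields $a_j^m=0$, proving that each $a_j$ is nilpotent.

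For part (ii), I would invoke the classical fact that a commuting family of nilpotent matrices over $\mathbb{C}$ admits a common basis in which every member is strictly upper triangular (choose a common eigenvector, necessarily with eigenvalue $0$, and iterate on the quotient). In such a basis, any product of $n$ strictly upper triangular $n\times n$ matrices vanishes, because the index of the lowest nonzero superdiagonal strictly increases with each multiplication. Hence $a_{j_1}\cdots a_{j_n}=0$ for every tuple, and the displayed expansion forces $\eta^n=0$.

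The computation behind part (i) is essentially forced by positivity once the expansion of $\eta^m$ is written down, so no real obstacle arises there. The only delicate point is the simultaneous triangularization step in part (ii): without commutativity nothing prevents products of nilpotent matrices from being nonzero at arbitrary length, so the commutativity hypothesis is precisely what upgrades pointwise nilpotence of the $a_j$ to joint nilpotence of the associative algebra they generate, and it is through this algebraic step that all length-$n$ words in the Kraus operators are controlled at once.
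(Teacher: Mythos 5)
Your proof is correct. Part (i) is essentially the paper's argument: expand $\eta^m$ as a Kraus sum, observe that a vanishing sum of positive semidefinite matrices forces each summand to vanish, and specialize to constant tuples. (You evaluate at $a=1_n$; the paper evaluates at $aa^{*}$ for arbitrary $a$ and then cancels $a$ — same idea, yours is marginally more direct.) Part (ii), however, takes a genuinely different route. The paper argues combinatorially: it sets $m=n^{3}$ and uses the pigeonhole principle together with commutativity to find, inside any word of length $n^{3}$ in the $n^{2}$ letters $a_{j}$, some letter repeated at least $n$ times, which can be collected into a factor $a_{j}^{n}=0$. You instead invoke simultaneous strict upper triangularization of a commuting family of nilpotent matrices (common eigenvector with eigenvalue $0$, pass to the quotient, iterate), after which every word of length $n$ in the $a_{j}$ vanishes. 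Your argument requires the slightly less elementary linear-algebra input, but it buys a much sharper conclusion: $\eta^{n}=0$ rather than the paper's $\eta^{n^{3}}=0$. Both are complete proofs; your closing remark correctly identifies commutativity as the hypothesis that upgrades individual nilpotence to joint nilpotence of the generated algebra, which is exactly the role it plays in the paper's pigeonhole step as well.
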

\begin{proof}
(i) Let $\eta^{m}=0$ for some $m\geq1.$ Then, it follows that:
\begin{eqnarray}
\sum_{1\leq j_{1},\cdots,j_{m}\leq n^{2}} (a_{j_{1}}\cdots a_{j_{m}}a)(a_{j_{1}}\cdots a_{j_{m}}a)^{*}&=&\sum_{1\leq j_{1},\cdots,j_{m}\leq n^{2}} (a_{j_{1}}\cdots a_{j_{m}})aa^{*}(a_{j_{m}}^{*}\cdots a_{j_{1}}^{*})\nonumber\\
&=&\sum_{j_{1}=1}^{n^2}\cdots \sum_{j_{m}=1}^{n^2}(a_{j_{1}}\cdots a_{j_{m}})aa^{*}(a_{j_{m}}^{*}\cdots a_{j_{1}}^{*})\nonumber\\
&=&\sum_{j_{1}=1}^{n^2}a_{j_{1}}\Big(\cdots \Big(\sum_{j_{m}=1}^{n^2} a_{j_{m}}aa^{*}a_{j_{m}}^{*}\Big)\cdots \Big)a_{j_{1}}^{*}\nonumber\\
&=&\eta^{m}(aa^{*})\nonumber\\
&=&0\nonumber
\nonumber
\end{eqnarray}
for all $a\in M_n(\mathbb{C}).$ Consequently, by positivity of all elements of the form $(a_{j_{1}}\cdots a_{j_{m}}a)(a_{j_{1}}\cdots a_{j_{m}}a)^{*}$ it follows that $$(a_{j_{1}}\cdots a_{j_{m}}a)(a_{j_{1}}\cdots a_{j_{m}}a)^{*}=0\ \ (1\leq j_{1},\cdots,j_{m}\leq n^{2}),$$
for all $a\in M_n(\mathbb{C}).$ On the other hand ,  $M_n(\mathbb{C})$ is a $C^{*}$-algebra and hence:
$$a_{j_{1}}\cdots a_{j_{m}}a=0\ \ (1\leq j_{1},\cdots,j_{m}\leq n^{2}),$$ for all $a\in M_n(\mathbb{C}),$ or equivalently:
$$a_{j_{1}}\cdots a_{j_{m}}=0\ \ (1\leq j_{1},\cdots,j_{m}\leq n^{2}).$$ Now, take $j_{1}=\cdots=j_{m}=j$ where $1\leq j\leq n^2$ and the desired result is proved.\\

(ii) Since $a_{j}\ \ (1\leq j\leq n^2)$ are nilpotent, it follows that $a_{j}^{n}=0\ \ (1\leq j\leq n^2).$ Put $m=n^3,$ then by commutativity of the these matrices it follows that:
\begin{eqnarray}
a_{j_{1}}\cdots a_{j_{m}}&=&\prod_{p=0}^{n-1}\prod_{q=1}^{n^2}a_{j_{pn^2+q}}=a_{j_{(j_{1},\cdots,j_{m})}}^{n}\prod\prod_{j_{pn^2+q}\neq j_{(j_{1},\cdots,j_{m})}}a_{j_{pn^2+q}}=0,\nonumber
\nonumber
\end{eqnarray}
for all $1\leq j_{1},\cdots,j_{m}\leq n^{2}$. Consequently,
\begin{eqnarray}
\eta^{m}(a)&=&\sum_{1\leq j_{1},\cdots,j_{m}\leq n^{2}} (a_{j_{1}}\cdots a_{j_{m}})a(a_{j_{m}}^{*}\cdots a_{j_{1}}^{*})=0\nonumber
\nonumber
\end{eqnarray}
for all $a\in M_n(\mathbb{C}).$
\end{proof}
Considering above lemma, for a category of nilpotent $\eta's$ we have:
\begin{corollary}
Let $G_{s}:\mathbb{H}^{+}(M_n(\mathbb{C}))\rightarrow\mathbb{H}^{-}(M_n(\mathbb{C}))$ be the operator-valued Cauchy transform of a $M_{n}(\mathbb{C})$-valued semicircular random variable $s$ satisfying the functional equation \eqref{eq:8},  $b\in \mathbb{H}^{+}(M_n(\mathbb{C})),$ $D=0$ and the completely positive map $\eta$ is given by:
\begin{eqnarray}
&&\eta:M_n(\mathbb{C})\rightarrow M_n(\mathbb{C})\nonumber\\
&&\eta(a)=\sum_{j=1}^{n^2}a_jaa_{j}^{*},\ \ a_{j}^{n}=0\ (1\leq j\leq n^2),\ \ a_{j_1}a_{j_2}=a_{j_2}a_{j_1}\ (1\leq j_1,j_2\leq n^2).\nonumber
\nonumber
\end{eqnarray}
Then  the associated probability measure $\mu_{s}$ to $G_{s}$ has at least one atom at $x=0.$
\end{corollary}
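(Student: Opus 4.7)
The plan is to derive this corollary as a direct consequence of Theorem \ref{th:9} combined with part (ii) of the preceding lemma. First I would verify that the hypotheses of the lemma apply: the given $\eta$ is written in the Choi form $\eta(a)=\sum_{j=1}^{n^{2}} a_{j} a a_{j}^{*}$ with each $a_{j}$ nilpotent ($a_{j}^{n}=0$) and pairwise commuting. Invoking part (ii) of the lemma then yields that $\eta$ itself is nilpotent, specifically $\eta^{n^{3}}=0$. Since we also assume $D=0$, the hypotheses of Theorem \ref{th:9} are met, so $\mu_{s}$ has at least one atom.

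Next, to pin down the location of the atom at $x=0$, I would not re-prove Theorem \ref{th:9} but rather invoke the explicit output of its proof. Recall that the argument there produced a projection $q\in\ker(\eta^{m-1})$ with $q\neq 1$ and established the decomposition
\begin{equation*}
G_{s}(b) = b^{-1}(1-q) + q\,G_{s}(b)\,q, \qquad b=iy.1,\ y\gg 0.
\end{equation*}
By analytic continuation, this identity extends to all $b=\xi.1$ with $\xi\in\mathbb{C}^{+}$, so applying $tr_{n}$ gives
\begin{equation*}
(tr_{n}\circ G_{s})(\xi.1) = \xi^{-1}\,tr_{n}(1-q) + tr_{n}(q\,G_{s}(\xi.1)\,q), \qquad \xi\in\mathbb{C}^{+}.
\end{equation*}

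Finally, I would read off the atom at $0$ using \eqref{eq:3} and \eqref{eq:13}. Indeed, for $r=0$ and $\xi\to 0$ non-tangentially from $\mathbb{C}^{+}$, the term $\xi^{-1}tr_{n}(1-q)$ contributes exactly $tr_{n}(1-q)$ to $\lim_{\xi\to 0}\xi\,G_{\mu_{s}}(\xi)$, whereas the boundedness of $tr_{n}(q\,G_{s}(\xi.1)\,q)$ (which comes from the fact that $qG_{s}qb$ stays bounded near the origin under the nilpotency assumption, exactly as in the proof of Theorem \ref{th:9}) forces that piece to vanish in the limit. Hence
\begin{equation*}
\mu_{s}(\{0\}) = tr_{n}(1-q) > 0,
\end{equation*}
which is the desired conclusion. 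The main obstacle is purely bookkeeping: ensuring that the lemma's Choi-form assumptions indeed match the hypotheses on $\eta$ in Theorem \ref{th:9} (nilpotent and completely positive), and that the atom produced in that proof is unambiguously located at $0$ rather than at some other point of $\mathbb{R}$ — both of which follow transparently once the decomposition above is in hand.
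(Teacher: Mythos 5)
Your proposal is correct and matches the paper's (implicit) argument exactly: the paper derives this corollary by applying part (ii) of the preceding lemma to conclude that $\eta$ is nilpotent and then invoking Theorem \ref{th:9}, whose proof already locates the atom at $x=0$ via the decomposition $G_{s}(b)=b^{-1}(1-q)+qG_{s}(b)q$ and the limit relation \eqref{eq:3}. The only case you leave tacit is the degenerate one where $\ker(\eta^{m-1})=0$, but there $G_{s}(b)=b^{-1}$ gives $\mu_{s}(\{0\})=1$, so the atom is still at $0$ and the conclusion is unaffected.
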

A few examples of interest are discussed  in connection to the Theorem 4.1. :
\begin{remarks}
(i)  The converse of the assertion of the Theorem \ref{th:9}. does not hold. To see this, let $n=2$, $0\neq|\alpha|\neq|\beta|\neq0$ and define: $$a_1=\left(\begin{array}{cc}
0 & \alpha\\
\beta & 0
\end{array}
\right),\ \ a_j=0\ (2\leq j\leq 4).$$
Under these conditions, considering $D=0$ and $\eta(a)=a_{1}aa_{1}^{*}$ in the Equation \eqref{eq:8} with the restriction condition \eqref{eq:9}, and solving it for $G_{s}(b)$ yields:
$$
tr_{2} (G_{s}(b))=\frac{(|\alpha|^2+|\beta|^2)\Big(\xi^2+\sqrt{(-|\alpha|^2+|\beta|^2-\xi^2)^2-4.|\alpha|^2\xi^2}\Big)-(|\alpha|^2-|\beta|^2)^2}{4\xi|\alpha|^2|\beta|^2},
$$
and, by considering the Equations  \eqref{eq:3} and \eqref{eq:13} it follows that:
$$
\mu_{s}(\{r\})= 0\ \ \textrm{if}\ \ \ r\neq0,\ \frac{1}{2}\Big(1-\left|\frac{\beta}{\alpha}\right|^{2sgn(1-|\frac{\beta}{\alpha}|)}\Big)\ \ \ \textrm{if}\ \ r=0.
$$
(ii) If the assumption of nilpotency of the map $\eta$ is violated in the statement of the Theorem \ref{th:9}, then the associated distribution can have no atom. To see this,  let $n=2$, $|\alpha|=|\beta|\neq0$ and define: $$a_1=\left(\begin{array}{cc}
0 & \alpha\\
\beta & 0
\end{array}
\right),\ \ a_j=0\ (2\leq j\leq 4).$$
Under these conditions, considering $D=0$ and $\eta(a)=a_{1}aa_{1}^{*}$ in the Equation \eqref{eq:8} with the restriction condition \eqref{eq:9}, and solving it for $G_{s}(b)$ yields:
$$
tr_{2} (G_{s}(b))= \frac{\xi-\sqrt{\xi^2-4|\alpha|^2}}{2|\alpha|^2}
$$
Now, by considering the Equation \eqref{eq:13}, it follows that this is the Cauchy transform of the standard semicircular law of Wigner which has no atom.
\newline\\
(iii) In the  Theorem \ref{th:9} , for given nilpotent map  $\eta$ the associated probability measure to
the operator-valued Cauchy transform $G_{s}(b)$ may not be purely atomic. To see this, let $n=3$ and define :
$$
a_1 = \left(\begin{array}{ccc}
0&1&1\\
0&0&1\\
0&0&0
\end{array}\right),\ \ a_j=0\ (3\leq j\leq 9).
$$
Under these conditions, considering $D=0$ and  $\eta(a)=a_{1}aa_{1}^{*}$ in the Equation \eqref{eq:8} with the restriction condition \eqref{eq:9}, and solving it for $G_{s}(b)$ with the Groebner basis method ~\cite{DC - JL - DO} in Mathematica, it follows  that:
$$
tr_{3}(G_{s}(b))=\frac{1}{3}\Big(\frac{1}{\xi}+\frac{\xi-\sqrt{\xi^2-4}}{2}+(\xi^2-1)\big(\frac{\xi-\sqrt{\xi^2-4}}{2}\big)^3\Big),
$$
and, by considering the Equations  \eqref{eq:3} and \eqref{eq:13} it follows that:
$$
\mu_{s}(\{r\})=0\ \ \textrm{if}\ \  r\neq0,\ \frac{1}{3}\ \textrm{if}\ \ r=0,
$$
showing that $\mu_{s}$ is not purely atomic.
\end{remarks}
We end this section by mentioning a ``covering property" of distributions of matrix-valued semicircular random variables. Before that, we need a definition:
\begin{definition}
Let $\mu$ and $\nu$ be two probability measures on $\mathbb R$. We shall say that
$\nu$ is a component of $\mu$ if there exists a finite family $\{\nu_1,\dots,\nu_n\}$ of probability measures so that
$\nu\in\{\nu_1,\dots,\nu_n\}$ and $$\mu=\sum_{j=1}^n\alpha_j\nu_j$$ for some
$\alpha_1,\dots,\alpha_n\in[0,1]$ satisfying $\alpha_1+\cdots+\alpha_n=1$.
\end{definition}
\begin{proposition}
Any probability measure $\mu$ on $\mathbb{R}$ whose support is a finite set can be realized
as a component of a semicircular distribution $\mu_{s}$ of some matrix-valued semicircular random variable $s$ with nilpotent variance.
\end{proposition}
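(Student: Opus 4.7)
The plan is to specialize the finite-dimensional continued-fraction construction from Proposition~\ref{pro:11} to the natural size $n=m_{0}:=|\operatorname{Supp}(\mu)|$. Writing $\mu=\sum_{k=1}^{m_{0}}\lambda_{k}\delta_{x_{k}}$, the Jacobi data associated with $\mu$ reduce to the \emph{finite} sequences $\{\alpha_{j}\}_{j=1}^{m_{0}}\subseteq\mathbb{R}$ and $\{\omega_{j}\}_{j=1}^{m_{0}-1}\subseteq\mathbb{R}^{+}$, and Theorem~\ref{th:8} then yields a finite continued fraction of depth $m_{0}$ that represents $G_{\mu}$ exactly.

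Mimicking the recipe in the proof of Proposition~\ref{pro:11}, I would set $D=\operatorname{diag}(\alpha_{1},\ldots,\alpha_{m_{0}})\in M_{m_{0}}(\mathbb{C})$ and define $\eta(a)=VaV^{*}$, where $V\in M_{m_{0}}(\mathbb{C})$ is the strictly upper-triangular matrix whose superdiagonal entries are $\omega_{1}^{1/2},\ldots,\omega_{m_{0}-1}^{1/2}$. Because $V$ is strictly upper triangular, $V^{m_{0}}=0$; therefore $\eta^{m_{0}}(a)=V^{m_{0}}a(V^{*})^{m_{0}}=0$, so $\eta$ is nilpotent, as the statement requires. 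Theorem~\ref{th:7} then produces a self-adjoint $M_{m_{0}}(\mathbb{C})$-valued semicircular random variable $s$ with first moment $D$ and variance $\eta$, and the computation carried out in the proof of Proposition~\ref{pro:11} (with $n$ frozen at $m_{0}$ rather than sent to infinity) shows that $G_{s}(\xi\cdot 1_{m_{0}})$ is a diagonal matrix whose $(k,k)$-entry is the depth-$(m_{0}-k+1)$ convergent $g_{m_{0},m_{0}-k+1}(\xi)$ of the continued fraction for $G_{\mu}$; in particular $g_{m_{0},m_{0}}(\xi)=G_{\mu}(\xi)$.

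To finish, take the distribution $\mu_{s}$ of $s$ with respect to the state $\operatorname{tr}_{m_{0}}\circ E_{M_{m_{0}}(\mathbb{C})}$. Each convergent $g_{m_{0},m}$ is itself the Cauchy transform of some probability measure $\mu_{m}$ on $\mathbb{R}$, the $m$-th convergent of a positive J-fraction being the spectral measure at $e_{1}$ of the truncated $m\times m$ Jacobi block; equivalently, this can be checked against the three conditions of Proposition~\ref{pro:2}. With $\mu_{m_{0}}=\mu$, applying the normalized trace and using~\eqref{eq:13} gives
\begin{equation*}
G_{\mu_{s}}(\xi)=\operatorname{tr}_{m_{0}}\bigl(G_{s}(\xi\cdot 1_{m_{0}})\bigr)=\frac{1}{m_{0}}\sum_{m=1}^{m_{0}}g_{m_{0},m}(\xi)=\frac{1}{m_{0}}\sum_{m=1}^{m_{0}}G_{\mu_{m}}(\xi),\qquad \xi\in\mathbb{C}^{+},
\end{equation*}
so $\mu_{s}=\frac{1}{m_{0}}\sum_{m=1}^{m_{0}}\mu_{m}$, which exhibits $\mu=\mu_{m_{0}}$ as a component of $\mu_{s}$ with weight $1/m_{0}$.

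The only nonroutine point I anticipate is the verification that each intermediate convergent $g_{m_{0},m}$ really is a Cauchy transform of a probability measure, so that the identity above is a genuine convex combination of measures and not merely an equality of analytic functions on $\mathbb{C}^{+}$. This is classical from the theory of orthogonal polynomials and J-fractions, but it is the one ingredient that must be imported from outside the machinery already developed in Sections~2 and~3; everything else is an immediate specialization of the proof of Proposition~\ref{pro:11}.
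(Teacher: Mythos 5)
Your proposal is correct and follows essentially the same route as the paper's proof: the same finite continued-fraction data, the same diagonal $D$ and rank-one-Kraus $\eta(a)=VaV^{*}$ with $V$ the strictly upper-triangular superdiagonal matrix (hence nilpotent), and the same decomposition $\operatorname{tr}_{n}(G_{s}(\xi\cdot 1))=\frac{1}{n}\sum_{k}\langle G_{s}(\xi\cdot 1)e_{k},e_{k}\rangle$ identifying $\mu$ as one of the $n$ equally weighted components. The only difference is cosmetic (your indexing of the convergents) plus your explicit flag that each diagonal entry must be verified to be the Cauchy transform of a probability measure, a point the paper simply asserts.
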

\begin{proof}
Assume $|Supp(\mu)|=n<\infty$, and let
$$
G_{\mu}(\xi)=\frac{1}{\xi-\alpha_1-\displaystyle{\frac{\omega_1}{\xi-\alpha_2-\displaystyle{\frac{\omega_2}{{\xi-\alpha_3-}_{\displaystyle{\ddots}\displaystyle{\frac{\omega_{n-1}}{\xi-\alpha_n}}}}}}}}
$$
be the continued fraction representation of $G_{\mu}$ as in Theorem \ref{th:8}. Then as in the proof of the Proposition \ref{pro:11}, define $b=\xi.1_n,\ D_n=(\alpha_{k}\delta_{kl})_{k,l=1}^{n}$ and the nilpotent completely positive map $\eta_n$ via:
\begin{eqnarray}
&&\eta_{n}:M_{n}(\mathbb{C})\rightarrow M_{n}(\mathbb{C})\nonumber\\
&&\eta_{n}\big((a_{kl})_{k,l=1}^{n}\big)=(\omega_{k}^{\frac{1}{2}}\delta_{(k+1)l})_{k,l=1}^{n}(a_{kl})_{k,l=1}^{n}(\omega_{k-1}^{\frac{1}{2}}\delta_{k(l+1)})_{k,l=1}^{n}.\nonumber
\nonumber
\end{eqnarray}
Then for the self-adjoint semicircular element $s=s_{n}$ with operator-valued Cauchy transform $G_{s}$ satisfying the functional equation \eqref{eq:8} in the form:
$$bG_{s}(b)=1+(D_{n}+\eta_{n}(G_{s}(b)))G_{s}(b),$$
we have $$G_{\mu}(\xi)=\langle G_{s}(\xi.1_n)e_1,e_1\rangle_{\ell_{2}^{n}}\ \ \  \xi\in\mathbb{C}^{+}.$$ Next, let $\mu_k\ \ (1\leq k\leq n)$ be a finitely supported probability measure on $\mathbb{R}$ with associated Cauchy transform:
$$
G_{\mu_{n-(k-1)}}(\xi)=\frac{1}{\xi-\alpha_1-\displaystyle{\frac{\omega_1}{\xi-\alpha_2-\displaystyle{\frac{\omega_2}{{\xi-\alpha_3-}_{\displaystyle{\ddots}\displaystyle{\frac{\omega_{k-1}}{\xi-\alpha_k}}}}}}}}\ \ (1\leq k\leq n).
$$
Note that $\mu=\mu_1.$ Then by proof of the Proposition \ref{pro:11}, we have
\begin{eqnarray}
\label{eq:14}
&&G_{\mu_{k}}(\xi)=\langle G_{s}(\xi.1_n)e_k,e_k\rangle_{\ell_{2}^{n}}  \ \ \ (1\leq k\leq n), \ \xi\in\mathbb{C}^{+}. \\
\nonumber
\end{eqnarray}
Consequently, by Equations \eqref{eq:13}  and \eqref{eq:14},  we have:
$$
G_{\mu_{s}}(\xi)=tr_{n}(G_{s}(\xi.1))=\frac{1}{n}\sum_{k=1}^{n}\langle G_{s}(\xi.1)e_k,e_k \rangle_{\ell_{2}^{n}}=\frac{1}{n}\sum_{k=1}^{n}G_{\mu_k}(\xi)  \ \ \ \xi\in\mathbb{C}^{+},
$$
and by the Equation \eqref{eq:3} it follows that:
$$
\mu_{s}=\frac{1}{n}\sum_{k=1}^{n}\mu_{k},
$$
proving the desired result.
\end{proof}


\section*{Acknowledgement}
									
The author would like to express his thanks to Prof. Serban Belinschi for his guidance and support on this paper.


\end{document}